\newtheorem{hypo}{Hypothesis}
\newtheorem{prop}[hypo]{Proposition}
\newtheorem{thm}[hypo]{Theorem}
\newtheorem{lem}[hypo]{Lemma}
\newtheorem{rqe}[hypo]{Remark}
\newtheorem{coro}[hypo]{Corollary}
\DeclareMathOperator{\var}{Var}
\DeclareMathOperator{\cov}{Cov}
\DeclareMathOperator{\diam}{diam}
\def\I{\mathcal{I}}
\def\PP{\mathbb{P}}
\def\RR{\mathbb{R}}
\newcommand {\refeq}[1] {(\ref{#1})}
\title{Self-intersections of trajectories of Lorentz process}
\date\today
\author{Fran\c{c}oise P\`ene}
\address{Universit\'e de Brest,
UMR CNRS 6205, Laboratoire de Math\'ematique de Bretagne Atlantique,
6 avenue Le Gorgeu, 29238 Brest cedex, France}
\email{francoise.pene@univ-brest.fr}
\subjclass[2010]{60F99,37D50}
\keywords{Sinai billiard, Lorentz process, self intersection \\
Fran\c{c}oise P\`ene is supported by the french ANR projects GEODE (ANR-10-JCJC-0108) and
PERTURBATIONS (ANR-10-BLAN-0106)}
\begin{document}

\begin{abstract}
We study the asymptotic behaviour of 
the number of self-intersections of a trajectory of a
$\mathbb Z^2$-periodic planar Lorentz process with strictly convex obstacles
and finite horizon. We give precise estimates for its expectation and
its variance. As a consequence, we establish the almost sure convergence
of the self-intersections with a suitable normalization.
\end{abstract}
\maketitle
\section*{Introduction}
\label{}
The {\bf Lorentz process} describes the evolution of a point particle
moving at unit speed in a domain $Q$ with elastic reflection
on $\partial Q$. We consider here a planar Lorentz process
in a $\mathbb Z^2$-periodic domain $Q\subseteq \mathbb R^2$ with strictly convex obstacles
$U_{i,\ell}$ constructed as follows. 
We choose a finite number of convex open sets $O_1,...,O_I\subset \RR^2$ with
$C^3$-smooth boundary and with non null curvature. We repeat these sets $\mathbb Z^2$-periodically by defining $U_{i,\ell}=O_i+\ell$ for every $(i,\ell)\in\{1,...,I\}\times\mathbb Z^2$. 
We suppose that the closures of the $U_{i,\ell}$ are pairwise disjoint.
Now we define the domain $Q:=\mathbb R^2\setminus\bigcup_{i=1}^I\bigcup_{\ell\in\mathbb Z^2}U_{i,\ell}$.
We assume that {\bf the horizon is finite}, which means that every line meets the boundary of $Q$ (i.e. there is no infinite free flight).
We consider a point particle moving in $Q$ with unit speed and with respect to
the Descartes reflection law at its reflection times (reflected angle=incident angle). We call configuration of a particle at
some time the couple constituted by its position
and its speed.
The Lorentz process in the domain $Q$ is the flow $(Y_t)_t$
on $Q\times S^1$ such that $Y_t$ maps the configuration at time 
$0$ to the configuration at time $t$.
%$(x)$ is the configuration at time $t$
%of a particle which had configuration $x$ at time $0$.
We assume that the initial distribution $\mathbb P$
is uniform on $(Q\cap [0,1]^2)\times S^1$.
The study of the Lorentz process is strongly related 
to the corresponding Sinai billiard $(\bar M,\bar\mu,\bar T)$.
Recall that this billiard is the 
probability dynamical system describing the dynamics
of the Lorentz process modulo $\mathbb Z^2$
and at reflection times. 
Ergodic properties of this dynamical system have been studied namely 
by Sinai
in \cite{Sinai70} (for its ergodicity), Bunimovich and Sinai \cite{BS80,BS81}, Bunimovich, Chernov and Sinai
\cite{BCS90,BCS91} (for central limit theorems), Young \cite{Young98}
(for exponential rate of decorrelation). Other limit theorems
for the Sinai billiard
and its applications to the Lorentz process have
been investigated in many papers, let us mention namely \cite{Conze,FP00,SzV} for its ergodicity and \cite{DSzV} for some other properties.

We are interested here in the study of the following quantity, called {\bf number of self-intersections of the trajectory
of the Lorentz Process}:
$$\mathcal V_t:=\#\{(r,s)\in[0;t]^2 \ :\ \pi_Q(Y_r)=\pi_Q(Y_s)\}  ,$$
where $\pi_Q$ denotes
the canonical projection from $Q\times S^1$ to $Q$ (i.e. $\pi_Q(q,\vec v)=q$).
This quantity $\mathcal V_t$ corresponds to the number of couples of times $(r,s)$ before time $t$ such that the particle was at the same position in the plane at both times $r$ and $s$. 
We also define $V_n$ as the number of self-intersections up to the $n$th reflection time.
The studies of $\mathcal V_t$ and of $V_n$ are naturally linked.

Self-intersections of random walks have been studied by many authors
(see \cite{Chen} and references therein). Motivated by the study of planar random walks in random sceneries, Bolthausen \cite{Bol} established an exact estimate for the expectation 
of the number of self-intersections of planar recurrent random walks.
He also stated an upper bound for its variance.
This last estimate was sufficient for his purpose but not optimal. A precise estimate for this variance has recently been stated by Deligiannidis and Utev \cite{DU}.

In view of planar Lorentz process in random scenery,
another notion of self-intersections of Lorentz process arises:
the number of self-intersections of the Lorentz process seen on obstacles, i.e.
the number $\hat V_n$ of couples of times $(r,s)$ (before the $n$-th reflection)
such that the particle hit the same obstacle at both times $r$ and $s$.
This quantity has been studied in \cite{FP09b,FP13a}.
In the present work, our approach has some common points with \cite{FP09b,FP13a} but the study of $V_n$ (and thus of $\mathcal V_t$) is much more delicate than the study of $\hat V_n$ (see section \ref{toto} for some explanations).

Let us define $(\I_k,S_k)$ in $\{1,...,I\}\times\mathbb Z^2$
as the index of the obstacle hit at
the $k$-th reflection time
($(I_0,S_0)$ being the index of the obstacle at time 0
or at the last reflection time before 0). The asymptotic behaviour of $(S_n)_n$ plays
some role here. In particular, our proofs use a decorrelation
result and some precised local limit
theorems for $(S_n)_n$. As a consequence, the constants appearing
in our statements are expressed in terms 
of the asymptotic (positive) variance matrix  $\Sigma^2$
of $(k^{-1/2}S_k)_{k\ge 1}$ (with respect to $\bar\mu$).
\begin{thm}\label{thm1a}
We have
\begin{equation}\label{Vn}
{\mathbb E}_{\bar\mu}[V_n]= c n\log n+O(n),\ \ \mbox{with}\ \ 
c:=2\mathbb E_{\bar\mu}[\tau]/(\pi\sqrt{\det \Sigma^2}
    \sum_i|\partial O_i|),
\end{equation}
where $\tau$ is the free flight length until the next reflection time.
\end{thm}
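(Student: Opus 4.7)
The plan is to write $V_n$ as a pair-sum over trajectory segments, reduce its expectation to a stationary quantity depending only on the step-difference, and then evaluate by a local limit theorem for $S_k$. Denote by $\sigma_k\subset\RR^{2}$ the straight segment of the trajectory, in the universal cover, between the $k$-th and $(k{+}1)$-th reflections. Generically each transverse crossing of two non-consecutive segments produces exactly two ordered couples $(r,s)\in[0,t_n]^{2}$ with $\pi_Q(Y_r)=\pi_Q(Y_s)$, while the diagonal and the $O(n)$ pairs of consecutive (reflection-point-sharing) segments contribute $O(n)$ altogether. Hence
\[
V_n=2\sum_{0\le i<j\le n-1}\mathbf{1}_{\sigma_i\cap\sigma_j\ne\emptyset}+O(n),
\]
and $\bar T$-invariance of $\bar\mu$ gives
\[
\EE_{\bar\mu}[V_n]=2\sum_{k=1}^{n-1}(n-k)\,p_k+O(n),\qquad p_k:=\PP_{\bar\mu}(\sigma_0\cap\sigma_k\ne\emptyset).
\]
It suffices to show $p_k=A/(2\pi k\sqrt{\det\Sigma^{2}})+O(k^{-1-\varepsilon})$ for an explicit geometric constant $A$.

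\textbf{Local limit theorem input.} Because the horizon is finite, $\tau$ is bounded, so a crossing forces $S_k\in B$ for some fixed finite $B\subset\ZZ^{2}$. Setting $h_\ell(m,m'):=\mathbf{1}_{\sigma(m)\cap(\sigma(m')+\ell)\ne\emptyset}$,
\[
p_k=\sum_{\ell\in B}\EE_{\bar\mu}\!\bigl[h_\ell\bigl(\cdot,\bar T^{k}\cdot\bigr)\,\mathbf{1}_{S_k=\ell}\bigr].
\]
I would then invoke a precised \emph{local limit theorem with function} for the $\ZZ^{2}$-cocycle $(S_k)$ (of the kind cited in the introduction and already used for $\hat V_n$ in \cite{FP09b,FP13a}): for $f,g$ in a suitable Banach space of observables, uniformly in $\ell\in B$,
\[
\EE_{\bar\mu}\!\bigl[f\cdot g\circ\bar T^{k}\cdot\mathbf{1}_{S_k=\ell}\bigr]=\frac{\bar\mu(f)\,\bar\mu(g)}{2\pi k\sqrt{\det\Sigma^{2}}}+O\!\bigl(k^{-1-\varepsilon}\bigr).
\]
The indicator $h_\ell$ is not a tensor product, but approximating it by a finite sum $\sum_{a}f_a\otimes g_a$ associated to a partition of $\bar M$ resolving the crossing geometry, and absorbing the approximation error into the $O(k^{-1-\varepsilon})$ remainder, yields
\[
p_k=\frac{A}{2\pi k\sqrt{\det\Sigma^{2}}}+O\!\bigl(k^{-1-\varepsilon}\bigr),\qquad A:=\sum_{\ell\in\ZZ^{2}}\iint h_\ell\,d\bar\mu\otimes d\bar\mu.
\]

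\textbf{Geometric constant and conclusion.} For fixed $(m,m')$, the real set $\{\ell\in\RR^{2}:\sigma(m)\cap(\sigma(m')+\ell)\ne\emptyset\}$ is the Minkowski difference $\sigma(m)-\sigma(m')$, a parallelogram of area $\tau(m)\tau(m')|\sin(\psi(m)-\psi(m'))|$, with $\psi$ the outgoing direction. After averaging against $\bar\mu\otimes\bar\mu$ the lattice count in this parallelogram is replaced by its area, so
\[
A=\iint_{\bar M^{2}}\tau(m)\tau(m')\,|\sin(\psi(m)-\psi(m'))|\,d\bar\mu(m)\,d\bar\mu(m').
\]
Writing $d\bar\mu=(2\sum_i|\partial O_i|)^{-1}\cos\alpha\,dr\,d\alpha$ and integrating out the angular variables (a Santal\'o-type computation) evaluates this double integral as $A=2\EE_{\bar\mu}[\tau]/\sum_i|\partial O_i|$. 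Combining with $\sum_{k=1}^{n-1}(n-k)/k=n\log n+O(n)$,
\[
\EE_{\bar\mu}[V_n]=\frac{A}{\pi\sqrt{\det\Sigma^{2}}}\bigl(n\log n+O(n)\bigr)=cn\log n+O(n),
\]
with the claimed constant $c$.

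\textbf{Main obstacle.} The delicate step is the LLT applied with a \emph{joint} indicator $h_\ell$ of both endpoints: the standard statement concerns a tensor product $f\otimes g$, whereas the crossing event couples footpoints and directions at both times. One must choose a partition of $\bar M$ fine enough to resolve the crossing geometry (at some scale $k^{-\varepsilon}$), verify that the corresponding characteristic functions are admissible for the precised LLT with power error available on the Sinai billiard, and ensure the approximation error sums to $O(n)$. A secondary point is the area-replacement of the lattice count in the Minkowski parallelogram, which needs an equidistribution/smoothness argument for the distribution of $q(m)-q(m')$ under $\bar\mu\otimes\bar\mu$.
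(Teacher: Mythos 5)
Your reduction $\EE_{\bar\mu}[V_n]=2\sum_{k=1}^{n-1}(n-k)p_k+O(n)$ with $p_k=\bar\mu(E_{0,k})$, followed by an LLT-type estimate $p_k\sim A/(2\pi k\sqrt{\det\Sigma^{2}})$, is the same skeleton as the paper (formula \refeq{EVn} and Proposition \ref{prop2}). You also correctly identify the coupled indicator $h_\ell$ as the main obstacle; the paper's treatment of it---cutting $\bar M$ into small boxes $\mathcal C$, approximating the crossing event on each box by $\xi_{-k}^{k}$-measurable sets $\tilde{\mathcal C}\cap T^{-n}\tilde V^{(x)}$ at tuned scales $\varepsilon^{2}=\delta^{k}=n^{-1/10}$, and bounding the symmetric difference---is where essentially all the work lies, and your ``finite sum of tensor products'' sketch does not yet carry this out.

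There is, however, a concrete error in the geometric constant. Replacing the lattice count $\#\bigl(\ZZ^{2}\cap(\sigma(m)-\sigma(m'))\bigr)$ by the area of the Minkowski parallelogram is \emph{not} valid after averaging against $\bar\mu\otimes\bar\mu$: the intersection points of translated flight segments lie in the accessible region $Q_0:=Q\cap\T^{2}$, so the needed equidistribution modulo $\ZZ^2$ fails. Indeed, using the Liouville change of variable $\cos\alpha\,dr\,d\alpha\,ds\mapsto dq\,d\hat v$ on the suspension, one has $\int_{\bar M}\tau(m')\,g(\hat\psi(m'))\,d\bar\mu(m')=\mathrm{area}(Q_0)\int_{S^{1}}g\,d\hat\psi/\bigl(2\sum_i|\partial O_i|\bigr)$ for any $g$ depending only on the direction; with $\int_{S^{1}}|\sin(\psi_0-\psi)|\,d\psi=4$ this gives
$$
\iint_{\bar M^{2}}\tau(m)\tau(m')\,|\sin(\psi(m)-\psi(m'))|\,d\bar\mu(m)\,d\bar\mu(m')=\frac{2\,\mathrm{area}(Q_0)\,\EE_{\bar\mu}[\tau]}{\sum_i|\partial O_i|},
$$
which is strictly smaller than the required $A=2\,\EE_{\bar\mu}[\tau]/\sum_i|\partial O_i|$ by the factor $\mathrm{area}(Q_0)=1-\sum_i|O_i|<1$. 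The correct evaluation is the paper's Lemma (stated just before the proof of Proposition \ref{prop2}): with $V^{(x)}:=\{y\in M:[\pi_Q(y),\pi_Q T(y)]\cap[\pi_Q(x),\pi_Q T(x)]\ne\emptyset\}$, treating $[\pi_Q(x),\pi_Q T(x)]$ as a virtual flat mirror and invoking invariance of $\cos\varphi\,dr\,d\varphi$ under billiard maps gives $\mu(V^{(x)})=4\tau(x)$, whence $A=\int_{\bar M}\mu(V^{(x)})\,d\bar\mu(x)/\bigl(2\sum_i|\partial O_i|\bigr)=2\,\EE_{\bar\mu}[\tau]/\sum_i|\partial O_i|$. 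As written, your parallelogram-area shortcut would deliver the wrong value of $c$ in \refeq{Vn}.
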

\begin{thm}\label{thm1}
We have
\begin{equation}\label{Vt}
{\mathbb E}(\mathcal V_t)= \frac{2 t\log t}{\pi\sqrt{\det \Sigma^2}
    \sum_i|\partial O_i|}+O(t)\ \ \ \mbox{as }t\mbox{ goes to infinity.}
\end{equation}
\end{thm}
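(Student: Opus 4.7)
The plan is to reduce the continuous-time estimate \refeq{Vt} to the discrete-time estimate \refeq{Vn} via the approximation $\mathcal V_t\approx V_{N_t}$, where $N_t$ denotes the number of reflections of the flow in $[0,t]$. Between two consecutive reflection times the particle moves along a straight segment, so a coincidence $\pi_Q(Y_r)=\pi_Q(Y_s)$ with $r\ne s$ arises, generically, from a transverse intersection of two such segments. Consequently $V_{N_t}\le\mathcal V_t\le V_{N_t+1}$, and the boundary correction $V_{N_t+1}-V_{N_t}$ counts the self-intersections involving the last, incomplete segment. Thanks to Theorem \ref{thm1a} its expectation is bounded by the typical increment $\mathbb E_{\bar\mu}[V_{n+1}-V_n]=O(\log n)=O(t)$.

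The first step is to convert between the flow-invariant probability $\mathbb P$ (uniform on $(Q\cap[0,1]^2)\times S^1$) and the billiard-invariant measure $\bar\mu$. The standard suspension picture writes $\mathbb P$ as $(\bar\mu\otimes\mathrm{Leb}_{[0,\tau]})/\mathbb E_{\bar\mu}[\tau]$, so an expectation under $\mathbb P$ of a functional of $Y_t$ rewrites as an expectation under $\bar\mu$ after a uniform time-shift of the initial configuration along its current free segment. Finite horizon implies $\tau$ bounded, and this shift affects $\mathcal V_t$ only by an $O(t)$ error.

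The second step exploits the ergodic behaviour of $N_t$. Setting $\bar\tau:=\mathbb E_{\bar\mu}[\tau]$ and $\bar n:=\lfloor t/\bar\tau\rfloor$, the central limit theorem for $\tau$ under the Sinai billiard (\cite{BCS90,BCS91}) yields $N_t-\bar n=O_{L^2}(\sqrt t)$. Applying Theorem \ref{thm1a} at the deterministic index $\bar n$ gives
\begin{equation*}
\mathbb E_{\bar\mu}[V_{\bar n}]=c\,\bar n\log\bar n+O(\bar n)=\frac{2t\log t}{\pi\sqrt{\det\Sigma^2}\sum_i|\partial O_i|}+O(t),
\end{equation*}
the factor $\mathbb E_{\bar\mu}[\tau]$ hidden in the constant $c$ being cancelled by the factor $1/\bar\tau$ appearing in $\bar n$.

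The main remaining difficulty is to show $\mathbb E|V_{N_t}-V_{\bar n}|=o(t)$. Monotonicity of $V_n$ is not by itself sufficient because $N_t$ and the increments of $V_n$ are correlated. The cleanest route is to truncate: on the high-probability event $\{|N_t-\bar n|\le\sqrt{t\log t}\}$ one bounds $V_{N_t}-V_{\bar n}$ by a sum of $O(\sqrt{t\log t})$ increments, each of average size $O(\log t)$ by Theorem \ref{thm1a}, yielding an $O(\sqrt t\,\log^{3/2}t)$ contribution to the expectation; on the complementary event one uses the exponential concentration of $N_t$ coming from exponential mixing of $\bar T$ \cite{Young98} together with a crude moment bound $V_n\le n^2$. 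Both contributions are $o(t)$, and combining all the steps delivers \refeq{Vt}.
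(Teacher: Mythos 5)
Your overall architecture is sound and close to the paper's: express $\mathbb P$ through the suspension picture, replace $\mathcal V_t$ by $V_{N_t}$, approximate $N_t$ by the deterministic index $\bar n=\lfloor t/\bar\tau\rfloor$, and invoke Theorem \ref{thm1a}. However, there is one genuine gap and one misattribution.

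\textbf{The main gap is the change of measure on the Poincar\'e section.} You assert that an expectation under $\mathbb P$ ``rewrites as an expectation under $\bar\mu$ after a uniform time-shift.'' This is not correct: the push-forward of $(\bar\mu\otimes\mathrm{Leb}_{[0,\tau]})/\bar\tau$ to $\bar M$ is $\tau\,\bar\mu/\bar\tau$, not $\bar\mu$. So after the harmless $O(t)$ shift you are left with $\mathbb E_{\tau\bar\mu/\bar\tau}[V_{N_t}]$ (equivalently $\mathbb E_{h\bar\mu}[V_{N_t}]$ with $h$ proportional to $\tau$), and you still have to compare this with $\mathbb E_{\bar\mu}[V_{\bar n}]$. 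Since $\mathbb E_{\bar\mu}[\tau/\bar\tau]=1$, the discrepancy equals $\cov_{\bar\mu}(V_{\bar n},\tau/\bar\tau)$; boundedness of $\tau$ only gives the trivial estimate $O(\bar n\log\bar n)=O(t\log t)$, which is as large as the main term. The paper resolves this (Corollary \ref{coro2}) by Cauchy--Schwarz together with $\var_{\bar\mu}(V_n)=O(n^2)$ from Theorem \ref{prop1}, giving $|\cov_{\bar\mu}(V_n,\tau)|=O(n)$. You do not invoke the variance estimate anywhere, so this step is missing; one would need either Theorem \ref{prop1} or an explicit decorrelation argument (in the spirit of Proposition \ref{LEM}) bounding $\sum_{k<j}\cov_{\bar\mu}(\tau,\mathbf 1_{E_{k,j}})$ by $O(n)$, neither of which you provide.

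\textbf{A smaller issue:} the claim that each increment $\mathbb E_{\bar\mu}[V_{m+1}-V_m]=O(\log m)$ ``by Theorem \ref{thm1a}'' does not follow from the $O(n)$ error term in \refeq{Vn} alone (a difference of two quantities each known only up to $O(n)$ is a priori $O(n)$). What you actually need is the refined local estimate, Proposition \ref{prop2}: $\bar\mu(E_{0,j})=c/(2j)+O(j^{-1-\eta})$, which gives $\mathbb E_{\bar\mu}[V_{m+1}-V_m]=1+2\sum_{j\le m}\bar\mu(E_{0,j})=O(\log m)$. Granting this, your truncation treatment of the random time is a valid and somewhat more elementary alternative to the paper's: with threshold $r=t^{1/2+\epsilon}$ the good-event contribution is $O(r\log t)=o(t)$ by the increment bound, while the bad-event contribution is handled by polynomial moments of $n_t-t/\bar\tau$ (the paper's \refeq{controlent} suffices, no exponential concentration is needed) together with the deterministic bound $V_n\le n^2=O(t^2)$, choosing $m>1/\epsilon$. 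The paper instead bounds $\mathbb E_{\bar\mu}[V_n^2]^{1/2}=O(n\log n)$ via Theorem \ref{prop1} and balances the two events by an optimization in $\varepsilon$ (Corollary \ref{coro2b}). Your route avoids the variance estimate in the random-time step, but the variance estimate remains indispensable for the density change discussed above, so the dependence on Theorem \ref{prop1} cannot be eliminated.
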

Let us indicate that
these results are generalized in
Corollaries \ref{coro2} and \ref{coro2c} to a wider class of initial probability measures.
\begin{thm}\label{prop1}
We have
$\var _{\bar\mu}(V_n)\sim c' n^{2}$  
with
$$c':=c^2\left(1+2J-\frac{\pi^2}6\right)\mbox{ and } 
J:=\int_{[0,1]^3}\frac{(1-(u+v+w)){\bf 1}_{\{u+v+w\le 1\}}\,  du\, dv\, dw}    {uv+uw+vw}.$$
\end{thm}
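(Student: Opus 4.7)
The plan is to expand $\var_{\bar\mu}(V_n)$ as a sum of covariances indexed by ordered quadruples of reflection times, and reduce the computation, via the Chernov--Young exponential decay of correlations for the Sinai billiard together with a precise local limit theorem for $(S_n)_n$, to the variance of self-intersections of an auxiliary planar random walk with asymptotic covariance matrix $\Sigma^2$. This is the variance recently computed by Deligiannidis--Utev \cite{DU}, whose formula produces precisely the factor $1+2J-\pi^2/6$.

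Concretely, I would write $V_n=\sum_{0\le k<\ell\le n}X_{k,\ell}$ (up to lower-order terms from ordered/unordered conventions and same-segment contributions), where $X_{k,\ell}$ is the indicator that the $k$-th straight trajectory segment crosses the $\ell$-th one in $\RR^2$. The functional $X_{k,\ell}$ depends on the billiard configurations at the four times $k,k+1,\ell,\ell+1$ (which are \emph{local} on $\bar M$) together with the displacement $S_\ell-S_k$ (which is \emph{non-local}). By $\ZZ^2$-periodicity, $\mathbb E_{\bar\mu}[X_{k,\ell}]$ becomes an integral on $\bar M\times\bar M$ weighted by $\bar\mu(S_\ell-S_k=z)$ summed over $z\in\ZZ^2$; the local limit theorem for $(S_n)$ identifies its leading order as $c/(\ell-k)$, and integrated over $k<\ell$ this recovers Theorem \ref{thm1a}.

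For the variance I expand the quadruple covariance $\cov_{\bar\mu}(X_{k_1,\ell_1},X_{k_2,\ell_2})$ in the same spirit, and split the quadruples according to both the relative ordering of the four indices and the sizes of the consecutive gaps. When all gaps are large, the Chernov--Young exponential decay of correlations (applied to smoothings of $X$) approximately factors the joint law of the four local pieces, while a joint local limit theorem (in the spirit of \cite{SzV}) evaluates the probability that both $S_{\ell_1}-S_{k_1}$ and $S_{\ell_2}-S_{k_2}$ lie near $0$. Small-gap contributions are $o(n^2)$, by bounds of the type used in \cite{FP09b,FP13a}. After these reductions the $n^2$ coefficient decomposes into three pieces: a ``factorizable'' piece equal to $c^2$, a ``middle-overlap'' piece evaluated by the three-dimensional integral $J$, and a subtractive piece involving $\sum_{k\ge 1}k^{-2}=\pi^2/6$ coming from configurations with coincident endpoints.

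The main obstacle I expect is the uniformity required in the previous step. The indicator $X_{k,\ell}$ is a rough geometric functional of two planar segments, whereas the exponential decay of correlations for the billiard applies only to regular (e.g.\ H\"older) observables. One must therefore approximate $X_{k,\ell}$ by a H\"older surrogate whose approximation error against the true crossing indicator is controlled by a direct geometric argument in $\RR^2$, and for which decorrelation and the local limit theorem both apply with error uniform in the positions and shapes of the relevant obstacles. Threading this uniformity through the quadruple estimates, while simultaneously controlling the medium-range gap contributions, is substantially more delicate than in the obstacle-level setting of \cite{FP09b,FP13a}, where the analogous indicator was already local on $\bar M$.
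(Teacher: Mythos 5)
Your high-level plan -- expand $\var_{\bar\mu}(V_n)$ over ordered quadruples of times, split by relative ordering and gap sizes, use decorrelation for the well-separated pieces and a precise local limit theorem for the displacement $S_n$, and control the roughness of the crossing indicator by an approximation argument -- matches the skeleton of the paper's proof. You have also correctly put your finger on the genuine new difficulty compared to the obstacle-level self-intersections of \cite{FP09b,FP13a}: the crossing event $E_{j,k}$ depends on the actual starting configuration $x$, not just on the obstacle containing $x$ (the set $V^{(x)}$ is not locally constant), so one cannot write it as a finite union of local events; the paper handles this by partitioning $\bar M$ into small cells $\mathcal C\in\mathcal P_m$ and freezing $V^{(x)}$ on each cell, rather than by a H\"older surrogate, but those are variants of the same idea.

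Where I think the proposal has a real gap is in the treatment of the nested configuration $k_1<k_2<\ell_2<\ell_1$ (the term $A_3$ in the paper). For that ordering, both $\sum \bar\mu(E_{0,r+\ell+s}\cap E_{r,r+\ell})$ and the product term $\sum\bar\mu(E_{0,r+\ell+s})\bar\mu(E_{r,r+\ell})$ are individually of order $n^2\log n$, and the $n^2$ contribution comes only from a second-order cancellation between them, effectively $\frac{1}{r+s}-\frac{1}{r+\ell+s}$ after the local limit theorems are applied. A plain local limit theorem with an $O(n^{-3/2})$ remainder, or decorrelation alone, will not see this cancellation: one needs the sharper remainder of Propositions \ref{prop4a} and \ref{prop4} (with the $(|N|/\sqrt n+|N|^3/n^{3/2})e^{-a_0|N|^2/n}$ structure) so that the error terms, once summed over the lattice displacement and over the three gap variables, fall below $n^2$. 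Your sketch treats all orderings on the same footing ("after these reductions the $n^2$ coefficient decomposes into three pieces"), and the reference to $\sum_k k^{-2}=\pi^2/6$ from "coincident endpoints" does not describe where the constant actually comes from; without flagging the nested cancellation, the argument as written would only prove $\var_{\bar\mu}(V_n)=O(n^2\log n)$, not the precise $\sim c'n^2$.

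Relatedly, the hope to import the constant $1+2J-\pi^2/6$ as a black box from Deligiannidis--Utev does not quite close the argument. The billiard count $V_n$ is not literally $\#\{(k,\ell):S_k=S_\ell\}$ -- it is a segment-crossing count whose probabilities are $\sim c/(2n)$ rather than $\sim \beta/n$, with the proportionality factor depending on the local geometry $\tau$ through the measure of $V^{(x)}$. The fact that the ratio $\var(V_n)/(\mathbb E[V_n])^2$ has the same universal form as in DU is exactly what the theorem asserts, so it cannot be taken as input; one must carry out the iterated local-limit computations (and the new decorrelation estimate, Proposition \ref{LEM}, tailored to indicators of $\xi_{-k}^k$-measurable sets rather than H\"older observables) directly in the billiard setting, as the paper does.
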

\begin{coro}\label{thm2}
The following convergences hold almost everywhere (with respect
to $\bar\mu$ and to the Lebesgue measure on $Q\times S^1$ respectively):
$$\lim_{n\rightarrow +\infty}\frac {V_n}{n\log n}=c\ \ \mbox{and}
\ \ \lim_{t\rightarrow +\infty}\frac{\mathcal V_t}{t\log t}=
  \frac 2{\pi\sqrt{\det \Sigma^2}
    \sum_i|\partial O_i|}.$$
\end{coro}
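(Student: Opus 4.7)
The strategy is to upgrade the second moment bounds of Theorem~\ref{thm1a} and Proposition~\ref{prop1} to almost sure convergence via Borel--Cantelli along a well-chosen subsequence, and then to interpolate using the monotonicity of $n\mapsto V_n$.

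From $\mathbb{E}_{\bar\mu}[V_n]=cn\log n+O(n)$ and $\var_{\bar\mu}(V_n)\sim c'n^2$, Chebyshev's inequality gives for every $\varepsilon>0$
$$\PP_{\bar\mu}\pare{\va{V_n/(n\log n)-c}>\varepsilon}=O\pare{(\log n)^{-2}}.$$
This is not summable over all $n$, so I would pass to the subsequence $n_k:=\floor{e^{k^{3/4}}}$, which is dense enough for interpolation ($n_{k+1}/n_k\to 1$ and $\log n_{k+1}/\log n_k\to 1$) yet sparse enough for summability ($\sum_k(\log n_k)^{-2}\sim\sum_k k^{-3/2}<\infty$). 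Borel--Cantelli then yields $V_{n_k}/(n_k\log n_k)\to c$ $\bar\mu$-a.s. Since $V_n$ is nondecreasing in $n$, for $n_k\le n\le n_{k+1}$ one has
$$\frac{V_{n_k}}{n_{k+1}\log n_{k+1}}\le \frac{V_n}{n\log n}\le \frac{V_{n_{k+1}}}{n_k\log n_k},$$
and the extremal ratios both converge to $c$, establishing the first convergence.

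For the flow, let $N_t$ denote the number of reflections in $[0,t]$ and $t_n$ the $n$-th reflection time. Since $\mathcal V_{t_n}=V_n$ and $t\mapsto\mathcal V_t$ is nondecreasing, one has $V_{N_t}\le \mathcal V_t\le V_{N_t+1}$. Birkhoff's theorem applied to $\tau$ under the ergodic flow-invariant measure (Lebesgue on $Q\times S^1$, ergodic by Sinai) gives $N_t/t\to 1/\mathbb{E}_{\bar\mu}[\tau]$ a.s., whence $\log N_t/\log t\to 1$. The $\bar\mu$-a.s.\ statement for $V_n$ transfers to a Lebesgue-a.s.\ statement through the suspension representation of the flow over $(\bar M,\bar\mu,\bar T)$ with roof $\tau$, and is extended from $Q\cap[0,1]^2$ to all of $Q$ by the $\ZZ^2$-translation invariance of $\mathcal V_t$. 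Consequently,
$$\frac{\mathcal V_t}{t\log t}=\frac{\mathcal V_t}{N_t\log N_t}\cdot\frac{N_t\log N_t}{t\log t}\longrightarrow \frac{c}{\mathbb{E}_{\bar\mu}[\tau]}=\frac{2}{\pi\sqrt{\det\Sigma^2}\sum_i|\partial O_i|}.$$

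The main obstacle is the weakness of the Chebyshev bound: because $\var_{\bar\mu}(V_n)$ is only logarithmically smaller than $(\mathbb{E}V_n)^2$, the tail probability decays merely as $(\log n)^{-2}$. This dictates the two-scale subsequence scheme, and the choice of exponent $\alpha=3/4$ in $n_k=\floor{e^{k^\alpha}}$ is driven by the two constraints $\alpha>1/2$ (for Borel--Cantelli) and $\alpha<1$ (for $n_{k+1}/n_k\to 1$, without which the monotone interpolation step fails).
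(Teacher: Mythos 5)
Your proof takes essentially the same route as the paper's: Chebyshev plus Borel--Cantelli along a subsequence $n_k=\lfloor e^{k^\alpha}\rfloor$ with $\alpha\in(1/2,1)$, interpolation by monotonicity of $V_n$, then passage to the flow via $n_t/t\to1/\mathbb{E}_{\bar\mu}[\tau]$ a.s.\ and the suspension representation. One small caveat: the exact sandwich $V_{N_t}\le\mathcal V_t\le V_{N_t+1}$ is not quite literally correct because of diagonal/boundary bookkeeping in the definitions of $V_n$ and $\mathcal V_t$; the paper instead uses the cruder bound $|\mathcal V_t-V_{n_t}|\le 2n_t=O(t)$, which is negligible against $t\log t$ and suffices for the same conclusion.
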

The paper is organized as follows. In Section \ref{billiard},
we introduce the billiard systems, some notations and local limit theorems with remainder terms. In Section \ref{expectation}, we prove Theorem \ref{thm1a}. 
In Section \ref{deco}, we establish a decorrelation result in view of our proof of Theorem \ref{prop1} in Section \ref{variance}.
In Section \ref{expectation2}, we use Theorems \ref{thm1a}
and \ref{prop1} to prove Theorem \ref{thm1} and some
generalization of Theorems \ref{thm1a} and \ref{thm1}
to a class of probability measures. Finally we prove Corollary \ref{thm2} in Section \ref{LFGN}.
\section{Lorentz process and billiard systems}\label{billiard}
We denote by $\langle\cdot,\cdot\rangle$ the usual scalar product on $\mathbb R^2$ and by $|\cdot|$ the supremum norm on $\mathbb R^2$.
\subsection{planar billiard system}
For any $q\in\partial Q$, we write
$\vec n_q$ for the unit normal vector to $\partial Q$ at $q$
directed into $Q$.
We consider the set $M$ of couples position-unit speed $(q,\vec v)$
corresponding to a reflected vector on $\partial Q$:
$$M:=\{(q,\vec v)\in(\partial Q)\times S^1\ :\ \langle \vec n_q,\vec v\rangle\ge 0\}.$$
For every $i\in\mathbb \{1,...,I\}$, we fix some $q_i\in\partial O_i$.
A couple $(q,\vec v)\in M$ is parametrized by $(i,\ell,r,\varphi)\in\bigcup_{i'=1}^I\{i'\}\times\mathbb Z^2\cup\frac{\mathbb R}{|\partial O_{i'}|\mathbb Z} \times \left[-\frac\pi 2,\frac \pi 2\right]$ if
\begin{itemize}
\item $q-\ell$ is the point of $\partial O_i$ with curvilinear absciss
$r$ for the trigonometric orientation (starting from $q_i$)
\item $\varphi$ is the angular measure of $\widehat{(\vec n_q,\vec v)}$.
\end{itemize}
We consider the transformation $T$ mapping a reflected vector to the
reflected vector corresponding to the next collision time.
$T$ preserves
the (infinite) measure $\mu$ with density $\cos(\varphi)$ with respect
to the measure $drd\varphi$ on $M$. 
This infinite measure dynamical system $(M,\mu,T)$ is called {\bf planar billiard system}.
We endow $M$ with a metric $d$ equal to 
$\max(|r-r'|,|\varphi-\varphi'|)$ on any obstacle $\partial U_{i,\ell}$.
We define the map $\tau:M\rightarrow[0,+\infty[ $ by
$$\tau(q,\vec v):=\min\{s>0\ :\ q+s\vec v\in\partial Q\} ,$$
which corresponds to the length of the free flight of 
a particle starting from $q$ with initial speed $\vec v$.
Due to our assumptions, we have
$$\min\tau>0\ \ \  \mbox{and} \ \ \ \max\tau<\infty.$$
We define $R_0$ as the set of $(q,\vec v)\in M$ with
$\vec v$ tangent to $\partial Q$ at $q$ (this set corresponds to
$\{\varphi=0\}$).
For any integers $k\le \ell$, we write
$R_{k,\ell}=\bigcap_{m=k}^\ell T^{m}(R_0)$
and $\xi_{k}^\ell$ for the set of connected components of
$M\setminus R_{-\ell,-k}$. Due to the hyperbolic properties of $T$,
it is easy to see that (see for example \cite[Lemma A.1]{FPBS09})
\begin{equation}\label{xi}
\exists c_0>0,\ \exists \delta\in(0,1),\ \ \forall k\ge 1,\ \ 
\forall \mathcal C\in\xi_{-k}^k,\ \ \ \diam(\mathcal C)\le c_0\delta^k.
\end{equation}
We recall that $T$ is discontinuous but $\frac 12$-H\"older continuous
on each connected component of $M\setminus R_{-1,0}$.
\subsection{Lorentz process}
To avoid ambiguity, at collision times, we only consider reflected vectors.
The set of configurations is then
$$\mathcal M:=((Q\setminus \partial Q)\times S^1)\cup M\subseteq Q\times
S^1.$$
The Lorentz process is the flow $(Y_t)_t$ defined on $\mathcal M$ such that,
for every $(q,\vec v)\in\mathcal M$, $Y_t(q,\vec v)=(q_t,\vec v_t)$ is the couple
position-speed at time $t$ of a particle that was at position $q$ with speed $\vec v$
at time $0$. This flow preserves the measure $\nu$ on $\mathcal M$,
where $\nu$ is the product of the Lebesgue measure on $Q$ and
of the uniform measure on $S^1$.

This flow is naturally identified with the suspension flow $(\tilde Y_t)_t$ over $(M,\mu,T)$
with roof function $\tau$. Indeed, we recall that $(\tilde Y_t)_t$ is defined by $\tilde Y_t(x,s)=(x,s+t)$
on the set
$$\tilde{\mathcal M}:=\{(x,s)\in M\times[0,+\infty[\ :\ s\le\tau(x)\},\ \ \mbox{with the identifications}\ \ 
(x,\tau(x))\equiv(T(x),0).$$
The flow $(\tilde Y_t)_t$ preserves the measure $\tilde\nu$ on $\tilde{\mathcal M}$ given by $d\tilde\nu(x,s)=d\mu(x)ds$.
Now, we define $\Delta:\tilde{\mathcal M}\rightarrow\mathcal M$ by
$$\Delta((q,\vec v),s)=(q+s\vec v,\vec v) \ \ \ \mbox{if}\ \ s<\tau(q,\vec v).$$
We have
\begin{equation}\label{suspension}
Y_t=\Delta\circ\tilde Y_t\circ\Delta^{-1}\ \ \mbox{and}\ \ \Delta_*(\tilde\nu)=\nu. 
\end{equation}
\subsection{Billiard system with finite measure}
We define $\bar M$ as the set of $(q,\vec v)\in M$ such that
$q\in\bigcup_{i=1}^I\partial O_i$. A point of $\bar M$
is now parametrized by $(i,r,\varphi)$.
We consider the transformation $\bar T:\bar M\rightarrow\bar M$,
corresponding to $T$ modulo $\mathbb Z^2$. More precisely,
if $T(q,\vec v)=(q',\vec v')$, then $\bar T(q,\vec v)=(q",\vec v)$
with $q"\in (q'+\mathbb Z^2)\cap\cup_{i=1}^I\partial O_i$.
This transformation $\bar T$ preserves
the probability measure $\bar\mu$ of density $\cos(\varphi)/
(2\sum_i|\partial O_i|)$ with respect to $drd\varphi$.

We call {\bf toral billiard system} the probability dynamical system $(\bar M,\bar\mu,\bar T)$.

It is easy to see that $(M,\mu,T)$ corresponds to the cylindrical extension of
$(\bar M,\bar\mu,\bar T)$ by $\Psi:\bar M\rightarrow\mathbb Z^2$ given by $\Psi=(S_1)_{|\bar M}$ (with $S_n$ defined in the introduction).
Indeed
$$\forall((q,\vec v),\ell)\in\bar M\times\mathbb Z^2,\ \ \ 
T(q+\ell,\vec v)=(q'+\ell+\Psi(q,\vec v),\vec v')\ \ \mbox{if}\ \ (q',\vec v')=\bar T(q,\vec v).$$
More generally we have
\begin{equation}\label{extension}
\forall((q,\vec v),\ell)\in\bar M\times\mathbb Z^2,\ \forall n\ge 1,\ \  
T^n(q+\ell,\vec v)=(q_n+\ell+\sum_{k=0}^{n-1}\Psi(\bar T^k(q,\vec v)),\vec v_n)\ \ \mbox{if}\ \ (q_n,\vec v_n)=\bar T^n(q,\vec v).
\end{equation}
Observe that $\sum_{k=0}^{n-1}\Psi\circ\bar T^k=S_n$ on $\bar M$.
We recall the following local limit theorem with remainder term.
We set $\beta:= \frac 1{2\pi\sqrt{\det \Sigma^2}}$.
\begin{prop}[Proposition 4.1 of \cite{FPBS09}]
Let $p>1$. There exists $c>0$ such that, for any $k\ge 1$,
if $A\subseteq \bar M$ is a union of components of $\xi_{-k}^k$
and $B\subseteq \bar M$ is a union of
components $\xi_{-k}^\infty$, then for any $n>2k$ and $N\in\mathbb Z^2$
$$\left|\bar\mu(A\cap\{S_n=N\}\cap\bar T^{-n}(B))-\frac{\beta
   e^{-\frac 1{2(n-2k)}\langle (\Sigma^2)^{-1}N,N\rangle }}
     {n-2k}\bar\mu(A)\bar\mu(B)\right|\le \frac{ck\bar\mu
   (B)^{\frac 1p}}{(n-2k)^{\frac 32}}. $$ 
\end{prop}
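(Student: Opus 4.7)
The plan is to combine Fourier analysis in the $\ZZ^2$-variable $N$ with the spectral theory of the twisted transfer operator. Writing $\mathbf{1}_{\{S_n=N\}} = (2\pi)^{-2}\int_{[-\pi,\pi]^2} e^{i\langle u, S_n-N\rangle}\,du$ and using the duality $\int f\cdot g\circ\bar T^n\, d\bar\mu = \int \hat P^n f\cdot g\, d\bar\mu$ for the transfer operator $\hat P$ of $\bar T$, one reduces to
$$\bar\mu(A\cap\{S_n=N\}\cap\bar T^{-n}B)=\frac{1}{(2\pi)^2}\int_{[-\pi,\pi]^2} e^{-i\langle u,N\rangle}\int \hat P_u^{\,n}\mathbf{1}_A\cdot\mathbf{1}_B\, d\bar\mu\,du,$$
where $\hat P_u f := \hat P(e^{i\langle u,\Psi\rangle} f)$, so that $\hat P_u^{\,n} f = \hat P^n(e^{i\langle u,S_n\rangle} f)$.

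Next, I would invoke the now classical spectral picture for $(\hat P_u)_u$ on an appropriate Banach space $\B$ of dynamically H\"older observables, built from Young's tower and refined by Sz\'asz--Varj\'u to handle the cocycle: $\hat P_0$ has simple dominant eigenvalue $1$ with projector $\Pi_0 f = \int f\,d\bar\mu$; on a small ball $|u|\le\varepsilon$, $\hat P_u = \lambda(u)\Pi_u + R_u$ with $\lambda(u) = 1 - \tfrac12\langle\Sigma^2 u,u\rangle + o(|u|^2)$ and $\|R_u^n\|_\B \le C\kappa^n$; and for $|u|>\varepsilon$, $\|\hat P_u^{\,n}\|_\B \le C\rho^n$ with $\rho<1$. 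Split the $u$-integral accordingly: the tail $|u|>\varepsilon$ gives an exponentially small contribution absorbed in the $(n-2k)^{-3/2}$ remainder. On $|u|\le\varepsilon$, substitute $\hat P_u^{\,n}\approx\lambda(u)^n\Pi_u$, expand $\Pi_u = \Pi_0 + O(|u|)$, and Taylor-expand $\lambda(u)^{n-2k} = \exp(-(n-2k)\langle\Sigma^2 u,u\rangle/2)(1+O(\cdot))$. The resulting Gaussian integral
$$\int_{\RR^2} e^{-i\langle u,N\rangle}\,e^{-(n-2k)\langle\Sigma^2 u,u\rangle/2}\,du = \frac{(2\pi)^2\beta}{n-2k}\,\exp\!\Bigl(-\tfrac{\langle(\Sigma^2)^{-1}N,N\rangle}{2(n-2k)}\Bigr)$$
then delivers the main term $\bar\mu(A)\bar\mu(B)$ times the announced Gaussian density.

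The main obstacle, which is exactly what forces both the factor $k$ and the exponent $1/p$ on $\bar\mu(B)$, is the regularity control of the indicators $\mathbf{1}_A$ and $\mathbf{1}_B$. Since $A$ is a union of atoms of $\xi_{-k}^k$, the bound (\ref{xi}) says $\mathbf{1}_A$ is constant on pieces of diameter $\le c_0\delta^k$, which yields a dynamical H\"older seminorm growing linearly in $k$. By contrast, $B\in\xi_{-k}^\infty$ is smooth only along one hyperbolic direction, so $\mathbf{1}_B$ is not in $\B$ with any useful norm. I would spend $k$ iterations of $\hat P_u$ on each side: on the left, $\hat P_u^{\,n}\mathbf{1}_A = \hat P_u^{\,n-k}(\hat P_u^{\,k}\mathbf{1}_A)$ smooths $A$; on the right, push $k$ iterations onto $B$ via $\int \hat P_u^{\,n}f\cdot g\,d\bar\mu = \int \hat P_u^{\,n-k}f\cdot g\circ\bar T^k\,d\bar\mu$, so that $\mathbf{1}_B\circ\bar T^k$ becomes regular along the direction in which $\mathbf{1}_B$ was not; this consumes $2k$ iterations and explains the denominator $n-2k$. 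The still-rough final pairing against $\mathbf{1}_B$ is then controlled by H\"older's inequality in $L^{p'}$--$L^p$ duality, which is where the factor $\bar\mu(B)^{1/p}$ emerges. Maintaining this trade-off uniformly in $u$ and with explicit constants is the delicate technical step.
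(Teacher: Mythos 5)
The paper does not prove this proposition; it cites it verbatim from \cite{FPBS09}, so there is no in-paper proof to compare against. Your outline is nevertheless the standard (and correct) route: Fourier inversion on $\mathbb Z^2$, reduction to the twisted transfer operator $\hat P_u$ on a Young-tower Banach space, Nagaev--Guivarc'h spectral splitting $\hat P_u=\lambda(u)\Pi_u+R_u$ near $u=0$, exponential tail for $|u|>\varepsilon$, and an explicit Gaussian integral, which you compute with the right constant $\beta$. Your identification of the $\bar\mu(B)^{1/p}$ factor via the embedding of the tower space into $L^{q}(\hat\mu)$ and H\"older (cf.\ \eqref{normeq}) is also the correct mechanism; it is the same one used in the proof of Proposition~\ref{LEM}.

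Two of your explanatory details are wrong, however. Since $A$ is a union of atoms of $\xi_{-k}^{k}$, the indicator $\mathbf 1_A$ is not measurable along the stable quotient that defines $\hat M$, so ``$\hat P_u^k\mathbf 1_A$'' does not make sense directly on the tower: one must first replace $A$ by $\bar T^{-k}A$, which \emph{is} $\xi_0^{2k}$-measurable and hence liftable, exactly as in the proofs of Propositions~\ref{prop4} and~\ref{LEM}. That time shift also changes the Fourier phase, so the identity $\int\hat P_u^n f\cdot g\,d\hat\mu=\int\hat P_u^{n-k}f\cdot g\circ\hat T^k\,d\hat\mu$ that you use to ``push $k$ iterations onto $B$'' fails for $u\neq 0$ (because $\hat S_n\neq\hat S_{n-k}$); the correct version requires decomposing over the possible values of the initial cell-shifts, as is done in the proof of Proposition~\ref{prop4}. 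More importantly, your account of the factor $k$ is not right: by \eqref{norm0}, the Banach norm of the lifted indicator of a $\xi_0^{2k}$-measurable set is of order $\beta_0^{-2k}$, i.e.\ it grows \emph{exponentially} in $k$, not linearly. That exponential cost is what gets absorbed by the exponential contraction of $R_u^{n-2k}$ (using $n>2k$); the polynomial factor $k$ in the final bound comes instead from the bookkeeping of the time-shift decomposition --- the admissible values of $S_k$, of size $O(k)$, displace the Gaussian centre $N$, and the resulting kernel comparison is only accurate up to errors linear in that displacement --- not from a linear growth of any H\"older seminorm.
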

Note that, if we suppose $n\ge 3k$, we can replace the conclusion 
of this result by
\begin{equation}\label{eqprop4}
\left|\bar\mu(A\cap\{S_n=N\}\cap\bar T^{-n}(B))-\frac{\beta
   e^{-\frac 1{2n}\langle (\Sigma^2)^{-1}N,N\rangle }}
     {n}\bar\mu(A)\bar\mu(B)\right|\le \frac{ck\bar\mu
   (B)^{\frac 1p}}{n^{\frac 32}}.
\end{equation}
\begin{rqe}
Observe that, since the billiard system $(\bar M,\bar\mu,\bar T)$ is time reversible, 
if $A\subseteq \bar M$ is a union of components of $\xi_{-\infty}^k$
and $B\subseteq \bar M$ is a union of
components $\xi_{-k}^k$, if $n>3k$ then we have
\begin{equation}\label{eqprop4bis}
\left|\bar\mu(A\cap\{S_n=N\}\cap\bar T^{-n}(B))-\frac{\beta
   e^{-\frac 1{2n}\langle (\Sigma^2)^{-1}N,N\rangle }}
     {n}\bar\mu(A)\bar\mu(B)\right|\le \frac{ck\bar\mu
   (A)^{\frac 1p}}{n^{\frac 32}}.
\end{equation}
\end{rqe}
Estimates \refeq{eqprop4} and \refeq{eqprop4bis} will be enough
most of the time but not every time. We will also
use the following refinements of the local limit theorem.
\begin{prop}[Proposition 4 of \cite{FP09b}]\label{prop4a}
Let any real number $p>1$. There exist  $a_0>0$ and $K_1>0$ such that,
for any integers $k\ge 0$, $n\ge 1$, any measurable set $A\subseteq \bar M$ union of elements of $\xi_{0}^k$,
any measurable set $B\subseteq\bar M$
union of elements of $\xi_{0}^{+\infty}$, 
for any $N\in\mathbb Z^2$, we have
\begin{multline*}
\left|\bar\mu(A\cap\{S_{n+k}-S_k=N\}\cap\bar T^{-(n+k)}B)
       -\frac{\beta\bar\mu(A)\bar\mu(B)}{ n}
       e^{-\frac 1{2n}\langle(\Sigma^2)^{-1}N,N\rangle}\right|\\
\le K_1\left(\frac{\bar\mu(B)+\bar\mu(A)\bar\mu(B)^{\frac 1p}}{n^{\frac 32}}\left(\frac{|N|}{\sqrt{n}}+\frac{|N|^3}{n^{\frac 32}}\right)
    e^{-\frac {a_0}{n}|N|^2} +
    \frac{\bar\mu(B)^{\frac 1 p}}{n^2} \right).
\end{multline*}
\end{prop}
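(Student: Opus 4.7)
The plan is to employ the Nagaev--Guivarc'h spectral method combined with a Fourier inversion on the torus $[-\pi,\pi]^2$, exploiting the fact that $A$ is a union of elements of $\xi_0^k$ and $B$ a union of elements of $\xi_0^{+\infty}$ so that ${\bf 1}_A$ and ${\bf 1}_B$ sit nicely with respect to the Perron--Frobenius operator $P$ of $\bar T$. Writing $S_{n+k}-S_k = S_n\circ\bar T^k$ and introducing the perturbed operators $P_t f := P(e^{i\langle t,\Psi\rangle} f)$, a direct computation (iterating $\int (f\circ\bar T)\,g\,d\bar\mu=\int f\,(Pg)\,d\bar\mu$) would give
$$\bar\mu\bigl(A\cap\{S_{n+k}-S_k=N\}\cap\bar T^{-(n+k)}B\bigr) = \frac{1}{(2\pi)^2}\int_{[-\pi,\pi]^2} e^{-i\langle t,N\rangle}\int_{\bar M} {\bf 1}_B\cdot P_t^n\bigl(P^k{\bf 1}_A\bigr)\, d\bar\mu\, dt.$$
The proof then reduces to producing a sufficiently precise Edgeworth-type expansion of the integrand, uniform in $t$.

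The next ingredient is the by-now standard spectral decomposition of $P_t$ on a Banach space $\mathcal B$ adapted to the Sinai billiard (as in \cite{FPBS09}): for $t$ near $0$ one has $P_t^n = \lambda(t)^n\Pi_t + N_t^n$ with $\lambda(t)=1-\tfrac12\langle\Sigma^2 t,t\rangle+c(t)$, $c(t)=O(|t|^3)$, $\|\Pi_t-\Pi_0\|_{\mathcal B}=O(|t|)$ and $\|N_t^n\|_{\mathcal B}\le C\rho^n$ for some $\rho\in(0,1)$; for $t$ bounded away from $0$, aperiodicity of $\Psi$ yields $\|P_t^n\|_{\mathcal B}\le C\kappa^n$ with $\kappa\in(0,1)$. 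The two indispensable quantitative inputs are (i) a uniform estimate $\|P^k{\bf 1}_A\|_{\mathcal B}\le C\bar\mu(A)$, which exploits that $A$ is saturated along unstable directions at scale $\xi_0^k$, and (ii) the duality bound $\bigl|\int {\bf 1}_B f\,d\bar\mu\bigr|\le \bar\mu(B)^{1/p}\|f\|_{L^q(\bar\mu)}$ combined with a continuous embedding $\mathcal B\hookrightarrow L^q(\bar\mu)$, which is precisely what explains the exponent $\bar\mu(B)^{1/p}$ in the announced error.

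I would then split the $t$-integral into a small-$t$ region $|t|\le\delta$ and a complementary tail. On the tail the contribution is at most $C\kappa^n\bar\mu(A)\bar\mu(B)^{1/p}$, absorbed in the $\bar\mu(B)^{1/p}/n^2$ residual. On the main region, write $\lambda(t)^n = e^{-n\langle\Sigma^2 t,t\rangle/2}\bigl(1+nc(t)+O((nc(t))^2)\bigr)$; Fourier inversion of the leading Gaussian reproduces the main term $\beta\bar\mu(A)\bar\mu(B) n^{-1}e^{-\langle(\Sigma^2)^{-1}N,N\rangle/(2n)}$, while inversion of $nc(t) e^{-n\langle\Sigma^2 t,t\rangle/2}$ against $e^{-i\langle t,N\rangle}$ produces Hermite-polynomial factors in $N/\sqrt n$ of degree $1$ and $3$, which is exactly the $n^{-3/2}(|N|/\sqrt n+|N|^3/n^{3/2})$ prefactor appearing in the statement. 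Contributions coming from $\Pi_t-\Pi_0$ and from $N_t^n$ are absorbed into the $\bar\mu(B)^{1/p}/n^2$ residual by elementary size estimates.

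The main difficulty is twofold. First, one must establish the uniform bound $\|P^k{\bf 1}_A\|_{\mathcal B}\le C\bar\mu(A)$ independently of $k$ and of the particular way $A$ is cut into $\xi_0^k$-atoms; this relies on $P^k{\bf 1}_A$ being automatically regular along unstable manifolds at a $k$-independent scale, combined with Lasota--Yorke type inequalities on $\mathcal B$. Second, producing the Gaussian decay factor $e^{-a_0|N|^2/n}$ in the error (rather than a purely polynomial bound in $|N|$) forces a contour deformation through a complex saddle point roughly at $t^\ast\approx i(\Sigma^2)^{-1}N/n$, which in turn requires a complex $C^3$ (or analytic) extension of both $\lambda(t)$ and $\Pi_t$ to a small neighbourhood of the real segment. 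This complex deformation near the boundary of quasi-compactness is where most of the technical effort is concentrated and is the step most likely to need delicate surgery.
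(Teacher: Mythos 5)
The statement is Proposition~4 of \cite{FP09b}; the present paper does not prove it but imports it, so there is no ``paper's proof'' here to compare against. With that caveat, your overall framework (Fourier inversion on the torus, perturbed transfer operators $P_t$, spectral decomposition $P_t^n=\lambda(t)^n\Pi_t+N_t^n$, and an Edgeworth-type expansion of $\lambda(t)^n$) is the right one and is indeed the Nagaev--Guivarc'h method that \cite{FP09b} and \cite{SzV} rely on, lifted through Young's tower so that $P_t$ acts on the space $\mathcal V_{\beta_0,\varepsilon_0}$, with H\"older plus the embedding $\mathcal V\hookrightarrow L^q$ (equation \eqref{normeq}) producing the $\bar\mu(B)^{1/p}$ factor exactly as you say.

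Two concrete points need fixing. First, the Gaussian decay $e^{-a_0|N|^2/n}$ does \emph{not} call for a complex saddle-point deformation or an analytic continuation of $\lambda$ and $\Pi_t$; it is forced on you by the explicit Hermite--Gaussian Fourier identity $\int e^{-i\langle t,x\rangle} t^\alpha e^{-\langle\Sigma^2 t,t\rangle/2}\,dt = (\text{polynomial in }x)\cdot e^{-\langle(\Sigma^2)^{-1}x,x\rangle/2}$. After rescaling $t=u/\sqrt n$ the first Edgeworth correction is a cubic polynomial in $N/\sqrt n$ times a Gaussian in $N/\sqrt n$, which is precisely the form $(|N|/\sqrt n + |N|^3/n^{3/2})\,e^{-a_0|N|^2/n}$ in the statement; the piece of the error \emph{without} Gaussian decay (the truncation remainder, the $\Pi_t-\Pi_0$ and $N_t^n$ contributions, and the $|t|>\delta$ tail) is collected in the separate $\bar\mu(B)^{1/p}/n^2$ term. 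Second, the asserted uniform bound $\|P^k\mathbf 1_A\|_{\mathcal B}\le C\bar\mu(A)$ is stated without justification and is doubtful: in Young's tower the natural a priori bound on the lift of $\mathbf 1_A$ grows like $\beta_0^{-k}$ (cf.\ \eqref{norm0}), and the shape of the error in the proposition --- with a $\bar\mu(B)$ term carrying no $\bar\mu(A)$ at all and a $\bar\mu(B)^{1/p}/n^2$ term also carrying no $\bar\mu(A)$ --- strongly suggests that the two sides of the pairing are treated with different norm/Hölder estimates rather than with a single $\bar\mu(A)$-sized bound on the $A$-side.
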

We generalize this result as follows.
\begin{prop}\label{prop4}
Let any real number $p>1$. There exist $C>0$, $a_0>0$ and $K_1>0$ such that,
for any integers $k\ge 0$, $n\ge 1$ such that $n\ge 4k$, any measurable set $A\subseteq \bar M$ union of elements of $\xi_{-k}^k$,
any measurable set $B\subseteq\bar M$
union of elements of $\xi_{-k}^{+\infty}$, 
for any $N\in\mathbb Z^2$, we have
\begin{multline*}
\left|\bar\mu(A\cap\{S_{n}=N\}\cap\bar T^{-n}B)
       -\frac{\beta\bar\mu(A)\bar\mu(B)}{ n}
       e^{-\frac 1{2n}\langle(\Sigma^2)^{-1}N,N\rangle}\right|\\
\le K_1\, k\left(\frac{\bar\mu(B)+\bar\mu(A)\bar\mu(B)^{\frac 1p}}{n^{\frac 32}}\left(\frac{|N|}{\sqrt{n}}+\frac{|N|^3}{n^{\frac 32}}\right)
   e^{-\frac {a_0}{n}(\max(|N|-2k,0))^2} +
   k \frac{\bar\mu(B)^{\frac 1 p}}{n^2} \right).
\end{multline*}
\end{prop}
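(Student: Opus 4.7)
The plan is to reduce to Proposition~\ref{prop4a} via a shift argument followed by a decomposition on boundary displacements.

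First, by $\bar T$-invariance of $\bar\mu$ and the cocycle identity $S_n\circ\bar T^k=S_{n+k}-S_k$, one has
\[
\bar\mu(A\cap\{S_n=N\}\cap\bar T^{-n}B)=\bar\mu\bigl(A'\cap\{S_{n+k}-S_k=N\}\cap\bar T^{-n}B'\bigr),
\]
where $A':=\bar T^{-k}A$ and $B':=\bar T^{-k}B$. Since atoms of $\xi_{-k}^k$ (resp.\ $\xi_{-k}^{+\infty}$) encode trajectory data on $[-k,k]$ (resp.\ $[-k,+\infty)$), the preimage under $\bar T^k$ gives $A'\in\xi_0^{2k}$ and $B'\in\xi_0^{+\infty}$.

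Next I would decompose by the two boundary displacements. The observable $S_{2k}-S_k$ is constant on atoms of $\xi_0^{2k}$ and $S_k$ is constant on atoms of $\xi_0^{+\infty}$; by the finite-horizon hypothesis, both take values in a lattice set of size $O(k^2)$. Writing $A'=\bigsqcup_a A'_a$ with $A'_a:=A'\cap\{S_{2k}-S_k=a\}$ and $B'=\bigsqcup_b B'_b$ with $B'_b:=B'\cap\{S_k=b\}$, and noting that $S_{n+k}-S_n=b$ on $\bar T^{-n}B'_b$, the target event on $A'_a\cap\bar T^{-n}B'_b$ reduces to
\[
\{S_n-S_{2k}=N-a-b\}\cap A'_a\cap\bar T^{-n}B'_b,
\]
which matches the hypothesis of Proposition~\ref{prop4a} with $(\tilde k,\tilde n)=(2k,n-2k)$ (both nonnegative thanks to $n\ge 4k$).

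I would then apply Proposition~\ref{prop4a} to each summand and approximate the resulting Gaussian at scale $n-2k$ and centre $N-a-b$ by the target Gaussian at scale $n$ and centre $N$ via a one-term Taylor expansion. Using $|a|,|b|\le Ck$ (for $C=\max|\Psi|$) and $n\ge 4k$, the pointwise discrepancy is bounded by
\[
\frac{Ck}{n^{3/2}}\Bigl(\frac{|N|}{\sqrt n}+\frac{|N|^3}{n^{3/2}}\Bigr)e^{-a_0(\max(|N|-2k,0))^2/n}+\frac{Ck}{n^2};
\]
the factor $\max(|N|-2k,0)$ reflects that $|N-a-b|\ge\max(|N|-Ck,0)$, after absorbing $C$ into $a_0$. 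Weighting by $\bar\mu(A'_a)\bar\mu(B'_b)$ and summing telescopes via $\sum_a\bar\mu(A'_a)=\bar\mu(A)$ and $\sum_b\bar\mu(B'_b)=\bar\mu(B)$, producing the main-term correction. The Proposition~\ref{prop4a} errors are summed analogously, using the uniform Gaussian bound, the inequality $\bar\mu(A)\bar\mu(B)\le\bar\mu(B)\le\bar\mu(B)^{1/p}$, and a H\"older-type estimate on $\sum_b\bar\mu(B'_b)^{1/p}$ exploiting the $O(k^2)$ support.

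The main obstacle is the last, error-bookkeeping step: one has to balance the $O(k^2)$ count of $(a,b)$-summands against the Gaussian decay and the H\"older gain so as to recover precisely the powers $k$ and $k^2$ appearing in the announced bound rather than higher powers. Retaining the decay in $\max(|N|-2k,0)$ throughout the summation is crucial, since for $|N|\le 2k$ the Gaussian provides no suppression and the factor $k$ must then come entirely from the measure-theoretic estimates.
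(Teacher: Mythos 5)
Your argument is exactly the paper's: apply $\bar T^{-k}$ so that $A,B$ become unions of atoms of $\xi_0^{2k}$ and $\xi_0^{+\infty}$, fiber over the two boundary displacements $S_{2k}-S_k$ and $S_k$ (finitely many values by finite horizon), apply Proposition~\ref{prop4a} with parameters $(2k,n-2k)$ to each fiber, then telescope and convert the Gaussian from scale $n-2k$ and centre $N-x'-y'$ back to scale $n$ and centre $N$. The paper is equally terse on the closing bookkeeping (``using the fact that $n-2k\ge n/2$, we obtain the result''), so the caveat you raise about tracking the powers of $k$ is shared by the original proof rather than introduced by your route.
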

\begin{proof}
Observe that $\bar T^{-k} A$ is a union of elements of
$\xi_0^{2k}$ and that $\bar T^{-k} B$ is a union of elements of
$\xi_0^{+\infty}$.
We have
\begin{eqnarray*}
\bar\mu(A\cap\{S_{n}=N\}\cap\bar T^{-n}B)
&=&\bar\mu(\bar T^{-k}A\cap\{S_{n+k}-S_k=N\}\cap\bar T^{-(n+k)}B)\\
&=&\sum_{x',y'}\bar\mu(A_{x'};S_{n}-S_{2k}=N-x'-y';\bar T^{-n}B_{y'}),
\end{eqnarray*}
with $A_{x'}:=\bar T^{-k}A\cap\{S_{2k}-S_k=x'\}$ and $B_{y'}:=
\bar T^{-k}B\cap\{S_k=y'\}$ and where the sum is taken over
$x',y'\in\mathbb Z^2$ such that $|x'|\le k\Vert S_1\Vert_\infty$
and  $|y'|\le k\Vert S_1\Vert_\infty$;
Applying Proposition \ref{prop4a} with $(A_{x'},B_{y'})$
and using the fact that $n-2k\ge n/2$, we obtain the
result.
\end{proof}
\begin{rqe}\label{remprop4}
Observe again that, by time reversibility, if $A$ is a union of
elements of $\xi_{-\infty}^k$,
if $B$ is a union of components of $\xi_{-k}^k$ and
if $n\ge 4k$, then 
\begin{multline*}
\left|\bar\mu(A\cap\{S_{n}=N\}\cap\bar T^{-n}B)
       -\frac{\beta\bar\mu(A)\bar\mu(B)}{ n}
       e^{-\frac 1{2n}\langle(\Sigma^2)^{-1}N,N\rangle}\right|\\
\le K_1\, k\left(\frac{\bar\mu(A)+\bar\mu(B)\bar\mu(A)^{\frac 1p}}{n^{\frac 32}}\left(\frac{|N|}{\sqrt{n}}+\frac{|N|^3}{n^{\frac 32}}\right)
   e^{-\frac {a_0}{n}(\max(|N|-2k,0))^2} +
   k \frac{\bar\mu(A)^{\frac 1 p}}{n^2} \right).
\end{multline*}
\end{rqe}

%We extend the definition of $T$ to $Q\times S^1$ with the same definition: for $(q,\vec v)\in Q\times S^1$, we also write $T(q,\vec v)$ for the reflected vector at the next collision time and set $T^0\equiv id$ by convention.
%
%
%
%
%
\section{Proof of Theorem \ref{thm1a}}\label{expectation}\label{toto}
Observe that the trajectory of the particle (starting from $M$) up to the $n$-th
reflection is $\bigcup_{j=0}^{n-1}[\pi_Q\circ T^j,\pi_Q\circ T^{j+1}]$.
So we have $\bar\mu$-almost surely
$$V_n=\sum_{k,j=0}^{n-1}{\mathbf 1}_{E_{k,j}}
=n+2\sum_{k=1}^{n-1}\sum_{j=0}^{n-1-k}{\mathbf 1}_{E_{j,j+k}},$$
with 
$$E_{j,k}:=\{[\pi_Q\circ T^j,\pi_Q\circ T^{j+1}]
     \cap [\pi_Q\circ T^k,\pi_Q\circ T^{k+1}] \ne\emptyset\}.$$
Hence
\begin{equation}\label{EVn}
 {\bar\mu}(V_n)=n+2\sum_{k=1}^n
     (n-k)\bar\mu(E_{0,k}).
\end{equation}
%Moreover it is worth noting that the distribution of
%$(E_{j,j+k})_{j\ge 1,k\ge 1}$ is the same under $\mathbb P$ or under
%$\tau\bar\mu/\mathbb E_{\bar\mu}[\tau]$. 
%In particular,$${\mathbb P}(E_{j,j+k})={\mathbb E}_{\bar\mu}[\tau{\mathbf 1}_{E_{0,k}}]/{\mathbb E}_{\bar\mu}[\tau].$$
\begin{prop}\label{prop2}
There exists $\eta>0$ such that
$ \bar\mu(E_{0,n})= c/(2n)+O(n^{-1-\eta})$, with $c$ defined
in \refeq{Vn}.
\end{prop}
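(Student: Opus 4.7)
The plan is to condition on the $\mathbb Z^2$-displacement $S_n=N$, approximate the segment-intersection indicator by a cylinder function, apply Proposition \ref{prop4} cell by cell, and identify the resulting constant by a direct geometric computation. Using \refeq{extension}, for $x\in\bar M$ the segment $[\pi_Q T^n x,\pi_Q T^{n+1}x]\subset\mathbb R^2$ equals the translate $L(\bar T^n x)+S_n$ of the base segment $L(y):=[\pi_Q y,\pi_Q T y]$ lifted to the fundamental cell. Setting $D_N:=\{(x,y)\in\bar M\times\bar M\,:\,L(x)\cap(L(y)+N)\ne\emptyset\}$,
$$\bar\mu(E_{0,n})=\sum_{N\in\mathbb Z^2}\bar\mu\bigl(\{(x,\bar T^n x)\in D_N\}\cap\{S_n=N\}\bigr),$$
and since $|L(x)|\le\max\tau<\infty$, only $|N|\le C_0$ contribute for some fixed $C_0$.

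I would next approximate $\mathbf 1_{D_N}$ by functions constant on cells of $\xi_{-k}^k\times\xi_{-k}^k$. The boundary $\partial D_N$ consists of pairs $(x,y)$ where $L(x)$ and $L(y)+N$ touch tangentially or at an endpoint, and by strict convexity of the obstacles and finite horizon it is a piecewise smooth codimension-one submanifold of $\bar M\times\bar M$; hence by \refeq{xi} its $c_0\delta^k$-neighborhood has $\bar\mu\otimes\bar\mu$-mass $O(\delta^k)$. This yields a sandwich $\mathbf 1_{D_N^{-,k}}\le\mathbf 1_{D_N}\le\mathbf 1_{D_N^{+,k}}$ with $D_N^{\pm,k}$ a union of product cells satisfying $\bar\mu\otimes\bar\mu(D_N^{+,k}\setminus D_N^{-,k})=O(\delta^k)$. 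Decomposing $D_N^{\pm,k}=\bigsqcup_{A\in\xi_{-k}^k}A\times B_A$ disjointly (each $B_A$ a union of elements of $\xi_{-k}^\infty$), Proposition \ref{prop4} applied to each pair $(A,B_A)$ for $n\ge 4k$ contributes a main term summing to $\frac{\beta}{n}e^{-\langle(\Sigma^2)^{-1}N,N\rangle/(2n)}\bar\mu\otimes\bar\mu(D_N^{\pm,k})$.

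The hard part is the error accumulation: $(x,y)\mapsto\mathbf 1_{D_N}(x,y)$ is not of product form, so summing the per-cell error $k^2\bar\mu(B_A)^{1/p}/n^2$ of Proposition \ref{prop4} over the $O(\delta^{-2k})$ nonempty cells $A$ (each with $\bar\mu(B_A)$ uniformly bounded by the horizon) yields a total of order $k^2\delta^{-2k}/n^2$, which is dangerous. Choosing $p>1$ close to $1$ and $k=\alpha\log n$ with $\alpha\log(1/\delta)$ sufficiently small (e.g.\ $\alpha\log(1/\delta)<1/4$) balances this against the approximation error $O(\delta^k/n)$ to produce total error $O(n^{-1-\eta})$ for some $\eta>0$. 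Since $|N|\le C_0$ the Gaussian factor equals $1+O(1/n)$, whence
$$\bar\mu(E_{0,n})=\frac{\beta}{n}\sum_{|N|\le C_0}\bar\mu\otimes\bar\mu(D_N)+O(n^{-1-\eta}).$$

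Finally, $\sum_N\mathbf 1_{D_N}(x,y)=|\bar L(x)\cap\bar L(y)|$ counts the intersection points of the two segments projected to $\mathbb T^2=\mathbb R^2/\mathbb Z^2$. Using the isomorphism between the suspension of $(\bar M,\bar\mu,\bar T)$ and the Lorentz flow on $(Q/\mathbb Z^2)\times S^1$ with Lebesgue measure $dq\,d\theta/(2\sum_i|\partial O_i|)$, via the change of variables $dq\,d\theta=\cos\varphi\,dr\,d\varphi\,ds$, a Santal\'o-type coarea computation gives $\sum_N\bar\mu\otimes\bar\mu(D_N)=2\mathbb E_{\bar\mu}[\tau]/\sum_i|\partial O_i|$. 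Combined with $\beta=1/(2\pi\sqrt{\det\Sigma^2})$, this produces the coefficient $c/(2n)$ of \refeq{Vn}.
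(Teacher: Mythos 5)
Your outline shares the paper's skeleton: condition on the $\mathbb Z^2$-displacement $S_n=N$, approximate the geometric event by cylinder sets adapted to $\xi_{-k}^k$, apply a local limit theorem with remainder, and recover the constant $c$ from an integral-geometric computation that is exactly the paper's Lemma $\bar\mu(V^{(x)}+\mathbb Z^2)=2\tau(x)/\sum_i|\partial O_i|$. But you organize the approximation differently: you sandwich $\mathbf 1_{D_N}$ on $\bar M\times\bar M$ by unions of product cells $A\times B_A$ with $A$ ranging over \emph{all} of $\xi_{-k}^k$ and apply the LLT once per cell. The paper instead introduces a coarse auxiliary partition $\mathcal P_m$ of $\bar M$ into $O(m^2)=O(n^{1/10})$ pieces $\mathcal C$ of prescribed size $\varepsilon\sim 1/m$, chooses one representative $x_{\mathcal C}$ per piece, and exploits the H\"older continuity of $T$ (hence of $x\mapsto V^{(x)}$) to replace $V^{(x)}$ by the \emph{fixed} set $V^{(x_{\mathcal C})}$ on all of $\mathcal C$; the LLT is then applied only $O(m^2)$ times, and the error bookkeeping involves only $\sum_{\mathcal C}\bar\mu(\mathcal C)^{1/p}\le m^2\cdot m^{-2/p}$, a quantity controlled by the \emph{explicit} size and number of pieces.

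This difference is where your argument has a genuine gap. When you ``sum the per-cell error $\ldots$ over the $O(\delta^{-2k})$ nonempty cells $A$'', you are invoking a bound on the \emph{cardinality} of $\xi_{-k}^k$. Estimate \refeq{xi} gives only a \emph{diameter} bound $\diam\mathcal C\le c_0\delta^k$; it does not imply that the number of cells is $O(\delta^{-2k})$, since cells bounded by stable/unstable singularity curves are typically thin slivers and many can coexist in a small region. A priori the cardinality is controlled only by the combinatorial complexity of the singularity set $R_{-k,k}$, whose exponential growth rate is not $2\log(1/\delta)$ and, more importantly, is nowhere estimated in the paper. If you instead use the $\bar\mu(A)^{1/p}$-form of the error (via the time-reversed version of Proposition \ref{prop4}) together with H\"older's inequality, you still need $|\xi_{-k}^k|^{1/q}$ to be sub-polynomial, which again requires an explicit exponential bound on $|\xi_{-k}^k|$ that is not available from the results you cite. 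The paper's coarse partition $\mathcal P_m$ exists precisely to sidestep this: the number of LLT applications is $m^2$, fixed by hand, so no cell-counting is needed. To repair your argument you would have to either supply a growth estimate for $|\xi_{-k}^k|$ (true for finite-horizon Sinai billiards, but an additional fact not contained in the paper), or regroup the cells $A$ into coarse blocks on which the second coordinate $B_A$ is essentially constant --- which is in substance the paper's construction of $\mathcal P_m$ and $V^{(x_{\mathcal C})}$.
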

\begin{proof}[Proof of Theorem \ref{thm1a}]
It follows directly from \refeq{EVn} and from Proposition \ref{prop2}.
Indeed
$$\sum_{k=1}^n\frac{n-k}k=n(\sum_{k=1}^nk^{-1})-n=n\log(n)+O(n) $$
and
$$\sum_{k=1}^n\frac{n-k}{k^{1+\eta}}=n(\sum_{k=1}^nk^{-1-\eta})-
\sum_{k=1}^nk^{-\eta}=O(n).$$
\end{proof}
Before going into the proof of Proposition \ref{prop2}, 
let us see the common points between $\hat V_n$ and $V_n$ and let us
also explain why the study of $V_n$ requires more
subtle estimates than the study of $\hat V_n$.
Recall that $\hat V_n=\sum_{k,j=1}^{n}{\mathbf 1}_{(\mathcal I_k,S_k)=
(I_j,S_j)}$. So
${\mathbb E}_{\bar\mu}[\hat V_n]=n+2\sum_{k=1}^{n-1}
     (n-k)\bar\mu(\hat E_{0,k}),$
with $\hat E_{0,k}=\bigcup_{i=1}^I\{I_0=i,\ S_k=0,\ \I_k=i\}$.
This expression may appear similar to \refeq{EVn},
but $E_{0,k}$ is more complicate than $\hat E_{0,k}$. Indeed,
in $\bar M$, we have
\begin{eqnarray*}
E_{0,k}&=&\bigcup_{x\in\bar M}(\{x\}\cap T^{-k}(V^{(x)}))\\
&=&\bigcup_{N\in\mathbb Z^2}
\bigcup_{x\in\bar M}(\{x\}\cap\{S_k=N\}\cap\bar T^{-k}(\bar M\cap(V^{(x)}-N))),
\ \ \mbox{due to \refeq{extension}}
\end{eqnarray*}
with
$$V^{(x)}:=\{y\in M\ :\ [\pi_Q(y),\pi_Q\circ T(y)]
\cap[\pi_Q(x),\pi_Q\circ T(x)]\ne \emptyset\}$$
and with $A-N=\{(q-N,\vec v)\ :\ (q,\vec v)\in A\}$.
The union on $N$ is not a problem (it is a finite union since the 
horizon is finite), the main problem is that the union on $x$
is not finite. Indeed the set $V^{(x)}$ depends on $x$ (and not only on the obstacle
containing $x$).
\begin{lem}
We have
$\bar\mu(V^{(x)}+\mathbb Z^2)=\frac{2\tau(x)}{\sum_i|\partial O_i|}$.
\end{lem}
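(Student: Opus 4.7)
The plan is to reduce the claim to a computation in the planar (infinite-measure) billiard $(M,\mu,T)$ and then to transfer back to $(\bar M,\bar\mu)$ via the $\mathbb{Z}^2$-periodicity. Since $\bar M$ is a fundamental domain for the $\mathbb{Z}^2$-action on $M$, since $\mu$ is $\mathbb{Z}^2$-translation invariant (its density $\cos\varphi$ with respect to $dr\,d\varphi$ does not depend on the lattice shift), and since $d\bar\mu=d\mu/(2\sum_i|\partial O_i|)$ on $\bar M$, one has
$$\bar\mu(V^{(x)}+\mathbb{Z}^2)=\sum_{\ell\in\mathbb{Z}^2}\bar\mu\bigl((V^{(x)}+\ell)\cap\bar M\bigr)=\frac{\mu(V^{(x)})}{2\sum_i|\partial O_i|},$$
where the last equality uses that $\bigsqcup_{\ell}(\bar M-\ell)=M$ up to $\mu$-nullsets. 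It therefore suffices to show $\mu(V^{(x)})=4\tau(x)$.

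Set $\gamma:=[\pi_Q(x),\pi_Q\circ T(x)]$, a segment of length $\tau(x)$ in $\overline{Q}$ with endpoints on $\partial Q$ and interior disjoint from the obstacles. Both $M$ and $\gamma\times S^1$ are transverse Poincar\'e sections of the Lorentz flow $(Y_t)$, which preserves the volume $\nu=dq\,d\vec v$ on $\mathcal M$. The induced (flux) measures are $d\mu=\cos\varphi\,dr\,d\varphi$ on $M$ and $|\langle\vec v,\vec n\rangle|\,dt\,d\vec v$ on $\gamma\times S^1$, where $t$ is arclength along $\gamma$ and $\vec n$ is a unit normal to $\gamma$. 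The forward first-hit map sending $y\in V^{(x)}$ (with velocity non-parallel to $\gamma$) to the pair (its crossing point with $\gamma$, its velocity) is, modulo a $\mu$-nullset, a bijection onto $\gamma\times(S^1\setminus\{\pm\vec v_\gamma\})$. Being a first-return map between two cross-sections of the volume-preserving flow $(Y_t)$, it carries $\mu|_{V^{(x)}}$ to the flux measure on $\gamma\times S^1$, whence
$$\mu(V^{(x)})=\int_0^{\tau(x)}\!\!\int_{S^1}|\langle\vec v,\vec n\rangle|\,dt\,d\vec v=\tau(x)\int_0^{2\pi}|\cos\alpha|\,d\alpha=4\tau(x).$$

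The main obstacle is justifying the flux-preservation of the forward first-hit map; this is classical for billiards (essentially a Santal\'o-type change of variables) and only requires discarding the $\mu$-nullset of tangential/grazing trajectories, trajectories hitting the endpoints of $\gamma$, or directions parallel to $\gamma$, which is harmless under the finite-horizon hypothesis (it forces $\tau$ bounded and hence $V^{(x)}$ bounded, so the exceptional sets are genuinely negligible). The remaining ingredients, namely the elementary angular integral $\int_0^{2\pi}|\cos\alpha|\,d\alpha=4$ and the $\mathbb{Z}^2$-bookkeeping converting $\mu$ on $M$ to $\bar\mu$ on $\bar M$, are immediate.
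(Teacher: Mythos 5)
Your proof is correct and coincides in essence with the paper's: both compute $\mu(V^{(x)})=4\tau(x)$ by invariance of the measure $\cos\varphi\,dr\,d\varphi$ across the segment $\gamma=[\pi_Q(x),\pi_Q\circ T(x)]$ (the paper's ``virtual obstacle''), and then descend to $\bar\mu$ via the $\mathbb{Z}^2$-periodicity, accounting for the density $1/(2\sum_i|\partial O_i|)$. You simply spell out the virtual-obstacle shorthand as flux-preservation of the first-hit map onto the transverse Poincar\'e section $\gamma\times S^1$, which is the same Santal\'o-type argument the paper invokes.
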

\begin{proof}
We use the fact that the measure $\cos\varphi\, drd\varphi$ is preserved
by billiard maps. So, adding the virtual obstacle
$[\pi_Q(x),\pi_Q(T(x))]$, we obtain that $\mu(V^{(x)})$ is equal to the measure
of the set of vectors based on $[\pi_Q(x),\pi_Q\circ T(x)]$
for the measure $|\cos\varphi|\, drd\varphi$, which is equal to
$4\tau(x)$ (since $\tau(x)$ is the length of $[\pi_Q(x),\pi_Q(T(x))]$).
\end{proof}
\begin{proof}[Proof of Proposition \ref{prop2}]
There exists $C>0$ such that, for
any $\varepsilon>0$, any integer $n\ge 1$, any $x_0\in \bar M$,
any connected component $\mathcal C$ of
$B(x_0,\varepsilon)\setminus R_{-1,0}$ 
and any $x\in \mathcal C$, we have
$$(\mathcal C\cap E_{0,n})\triangle(\mathcal C\cap
     T^{-n}V^{(x)}) \subseteq \mathcal C \cap T^{-n}
\mathcal D_{\mathcal C},$$
with 
$$D_{\mathcal C}:=\pi_Q^{-1}\pi_Q(\mathcal C)\cup (\pi_Q^{-1}\pi_Q(T(\mathcal C))
\cup T^{-1}(\pi_Q^{-1}\pi_Q(\mathcal C))\cup T^{-1}(\pi_Q^{-1}\pi_Q(T(\mathcal C)))
\subseteq\mathcal E_{x,\varepsilon}$$
 and
$$\mathcal E_{x,\varepsilon}:=
\pi_Q^{-1}\pi_Q(B(x,\varepsilon)\cup B(T(x),C\sqrt{\varepsilon}) )
\cup T^{-1}(\pi_Q^{-1}\pi_Q(B(x,\varepsilon)\cup B(T(x),C\sqrt{\varepsilon}))),$$
since $T$ is $\frac 12$-H\"older continuous on each connected component
of $ M\setminus R_{-1,0}$.
Take $(\varepsilon,k)$ such that
$\varepsilon^2=n^{-\frac 1{10}}=\delta^k$ (with $\delta$ of (\ref{xi})).
For any connected component $\mathcal C$ of 
$B(x_0,\varepsilon)\setminus R_{-1,0}$, we choose (in a measurable way) 
a point  $x=x_{\mathcal C}\in \mathcal C$ and
define
\begin{equation}\label{EnC}
\tilde E_{n,\mathcal C}:=\tilde{\mathcal C}\cap T^{-n}\tilde V^{(x)},\ \ \ \mbox{with}\ \ \tilde{\mathcal C}:=
\bigcup_{Z\in\xi_{-k}^k:Z\cap \mathcal C\ne\emptyset}Z\ \ \mbox{and}\ \ 
\tilde V^{(x)}:=\bigcup_{Z\in\xi_{-k}^{k}:Z\cap 
   V^{(x)}\ne\emptyset}Z.
\end{equation}
We have
$|\bar\mu(\mathcal C\cap E_{0,n})-\bar\mu(\tilde E_{n,\mathcal C}) |
  \le\bar\mu(\tilde{\mathcal D}_{n,\mathcal C}),$
with
\begin{equation}\label{DnC}
 \tilde{\mathcal D}_{n,\mathcal C}:=\tilde {\mathcal C}\cap T^{-n}\tilde
{\mathcal D}_{\mathcal C},\ \ \mbox{with}\ \ 
\tilde
{\mathcal D}_{\mathcal C}:=\bigcup_{Z\in\xi_{-k}^k:Z\cap 
   {\mathcal D}_{\mathcal C}\ne\emptyset}Z.
\end{equation}
Observe that 
$\pi_Q(\bigcup_{x\in\bar M}V^{(x)})$ is contained in 
$\bigcup_{i=1}^I\bigcup_{|\ell|\le \Vert S_1\Vert_\infty}
(\partial O_i+\ell)$.
Therefore, due to \refeq{extension}
\begin{equation}\label{tildeE}
\tilde E_{n,\mathcal C}=\bigcup_{|\ell|\le \Vert S_1\Vert_\infty}(\tilde{\mathcal C}
    \cap\{S_n=\ell\}\cap\bar T^{-n}(\bar M\cap(\tilde V^{(x)}-\ell)))
\end{equation}
and
\begin{equation}\label{tildeD}
\tilde{\mathcal D}_{n,\mathcal C}=
\bigcup_{|\ell|\le \Vert S_1\Vert_\infty}(\tilde{\mathcal C}
    \cap\{S_n=\ell\}\cap\bar T^{-n}(\bar M\cap(\tilde {\mathcal D}_{\mathcal C}-\ell))).
\end{equation}
%where $A-\ell$ means $\{(q-\ell,\vec v),\ (q,\vec v)\in A\}$.
Let $p\in(1,2)$.
Due to \refeq{eqprop4bis} and (\ref{xi}), we conclude that
there exist $\tilde C,\tilde C_0,\tilde C_1>0$ such that, 
for any $\varepsilon>0$, any integer $n\ge 1$, any $x_0\in M$,
any connected component $\mathcal C$ of
$B(x_0,\varepsilon)\setminus R_{-1,0}$ 
and any $x\in \mathcal C$, we have
$$
|\bar\mu(\mathcal C\cap E_{0,n})-\bar\mu(\tilde E_{n,\mathcal C}) |
\le \bar\mu(\tilde{\mathcal D}_{n,\mathcal C})
\le \tilde C \left(\frac{\varepsilon^{2} 
\delta^k}{n}+\frac{k\varepsilon^{\frac 2 p}}{n^{\frac 32}}
  \right)\le\tilde C_0\frac{\varepsilon^{2} 
\delta^k}{n}
$$
and
\begin{eqnarray*}
\bar\mu(\mathcal C\cap E_{0,n})
&=& \pm\bar\mu(\tilde{\mathcal D}_{n,\mathcal C})+ 
          \bar\mu(\tilde E_{n,\mathcal C})\\
&=& \pm\tilde c\left(\frac{\varepsilon^{2}\delta^k}n
  +\frac {k\varepsilon^{\frac 2p}}
  {n^{\frac 32}}\right)
  +\frac{2\beta\bar\mu(\mathcal C)\bar\mu(V^{(x)})(1\pm\delta^{\frac k 2})}{n}\\
&=& \pm 2\tilde c n^{-\frac{23}{20}}+\frac{2\beta\bar\mu(\mathcal C)\tau(x)}{n\sum_i|\partial O_i|}.
\end{eqnarray*}
Let $m\ge 1$.
We consider a $\bar\mu$-essential partition of $\bar M$ in rectangles $(P_m^{(i,j,\ell)})_{i\in\{1,...,I\},j,\ell\in\{0,...,m-1\}}$ given by
$$P_m^{(i,j,\ell)}:=\left\{(i,\bar r,\varphi):r\in \left[\frac{j|\partial O_i|}m;
\frac{(j+1)|\partial O_i|}m\right],\ 
   \varphi\in\left[-\frac\pi 2+\frac{\ell\pi}m;
     -\frac\pi 2+\frac{(\ell+1)\pi}m \right]\right\}.$$
We write $\mathcal P_m$ for the union on $(i,j,\ell)$ of the 
partition of $P_m^{(i,j,\ell)}\setminus R_{-1,0}$
in connected components. 
Taking $\varepsilon^{-1}=m=n^{1/20}$  and $k$ such that
$\delta^k=n^{-1/10}$.
We obtain
\begin{eqnarray*}
\bar\mu(E_{0,n})&=&\sum_{\mathcal C\in\mathcal P_m}
    \bar\mu(\mathcal C\cap E_{0,n})\\
&=& \pm n^{-\frac {21}{20}}+\sum_{\mathcal C\in\mathcal P_m}
   \frac{2\beta{\mathbb E}_{\bar\mu}[\tau{\mathbf 1}_{\mathcal C}]}
{n\sum_i|\partial O_i|}\\
&=&\frac{2\beta{\mathbb E}_{\bar\mu}[\tau]}{n\sum_i|\partial O_i|}+O(n^{-\frac{21}{20}}),
\end{eqnarray*}
using the fact that $\tau$ is $1/2$-H\"older continuous on each
connected component of $\bar M\setminus R_{-1,0}$.
\end{proof}
\section{A decorrelation result}\label{deco}
Let us recall some
facts on the towers constructed by Young \cite{Young98}.
These towers are two dynamical systems $(\tilde M,\tilde\mu,\tilde T)$
and $(\hat M,\hat\mu,\hat T)$ such that $(\tilde M,\tilde\mu,\tilde T)$
is an extension of $(\hat M,\hat\mu,\hat T)$ and 
$(\bar M,\bar\mu,\bar T)$. This means that there exist two measurable
maps $\tilde\pi:\tilde M\rightarrow\bar M$ and
$\hat\pi:\tilde M\rightarrow\hat M$ such that: $\tilde\pi\circ\tilde T=\bar T\circ\tilde\pi$, $\hat\pi\circ\tilde T=\hat T\circ\hat\pi$,
$\bar\mu=(\tilde\pi)_*\tilde\mu$ and $\hat\mu=(\hat\pi)_*\tilde\mu$.
Young defines a separation time $\hat s$ on $\hat M$ such that
if $\hat s(x,y)\ge n$, we have $\hat s(x,y)=n+\hat s(\hat T^nx,
\hat T^ny)$ and $\tilde\pi\hat\pi^{-1}(\{x\})$,
$\tilde\pi\hat\pi^{-1}(\{y\})$ are contained in the same atom of
$\xi_{0}^n$.
For any $\beta_0\in(0,1)$ and any $\varepsilon_0\ge 0$, Young defines
a Banach space $(\mathcal V_{\beta_0,\varepsilon_0},\Vert  \cdot\Vert _{{(\beta_0,\varepsilon_0)}})$ containing $\mathbf 1_{\hat M}$.
Let $p$ be fixed and set $q:=p/(p-1)$. It is possible to find $\beta_0\in(0,1)$ and $\varepsilon_0>0$ such that 
\begin{equation}\label{normeq}
\Vert \cdot\Vert _{L^{q}(\hat\mu)}\le C_0\Vert \cdot\Vert _{{(\beta_0,\varepsilon_0)}} ,\ \ \mbox{for some}\  C_0>0.
\end{equation}
From now on, we write $(\mathcal V,\Vert\cdot\Vert)=
(\mathcal V_{\beta_0,\varepsilon_0},\Vert  \cdot\Vert _{{(\beta_0,\varepsilon_0)}})$ for this choice of $(\beta_0,\varepsilon_0)$.
Lemma 10 of \cite{FP09b} states that
\begin{equation}\label{lem10}
\Vert  gh\Vert \le \Vert  g\Vert _{(\beta_0,0)}\Vert  h\Vert .
\end{equation}
We recall that, due to Young's construction, 
if $f$ is constant on each element of $\xi_0^N$, then there exists
a measurable $\hat f$ defined on $\hat M$ such that
\begin{equation}\label{norm0}
f\circ\tilde\pi=\hat f\circ\hat \pi\ \ \mbox{with}\ \ 
\Vert \hat f\Vert _{(\beta_0,0)}\le \Vert f\Vert _\infty(1+2\beta_0^{-N}).
\end{equation}
Let $P$ be the transfer operator on $L^q$ of $f\mapsto f\circ\hat T$
seen as an operator on $L^p$. Young proved the quasicompacity of this operator $P$ on $\mathcal V$. As in \cite{FP09b}, 
we consider here an adaptation of the 
construction of Young's towers such that 1 is the only dominating eigenvalue of $P$ on $\mathcal V$ and has multiplicity one.
Hence, there exist $K_0>0$ and $a>0$ such that
\begin{equation}\label{expo}
\forall n\ge 1,\ \ \Vert  P^n(\cdot)-{\mathbb E}_{\hat\mu}[\cdot]\Vert \le K_0e^{-an}.
\end{equation}
Thanks to this property, Young established an exponential rate of decorrelation. 
Let us consider $\Psi:\bar M\rightarrow\mathbb Z^2$ 
the cell-shift function. Recall that, on $\bar M$, 
$S_n=\sum_{k=0}^{n-1}\Psi\circ\bar T^k$.
Since $\Psi$ is constant on each element of $\xi_0^{1}$, there
exists $\hat\Psi:\hat M\rightarrow\mathbb Z^2$ such that
$\hat\Psi\circ\hat\pi=\Psi\circ\pi$
and the coordinates of $\hat\Psi$ are in
$\mathcal V_{(\beta_0,0)}$ with norm
less than $3\beta_0^{-1}\Vert\Psi\Vert_\infty$.
For any $u\in\mathbb R^2$, we define $P_u( f)=P(e^{i\langle u,\hat\Psi\rangle} f)$. Observe that
\begin{equation}\label{Puk}
\forall k\ge 1,\ \ \ \ 
P_u^k(f)=P^k(e^{i \langle u,\hat S_k\rangle}f)\ \ \mbox{and}\ \ 
P_u^k(f\circ \hat T^k\times g)=f P_u^k(g),
\end{equation}
with $\hat S_n:=\sum_{k=0}^{n-1}\hat\Psi\circ\hat T^k$.
In \cite{SzV}, Sz\'asz and Varj\'u applied the classical Nagaev-Guivarc'h method \cite{Nag1,Nag2,GH} to this context.
This method plays a crucial role in the proof of Proposition \ref{LEM}
and gives in particular the following inequalities
(see \cite{SzV} and Lemma 12 of \cite{FP09b})
% and established
%the following key result.
%\begin{prop}[\cite{SzV}]\label{spectral}
%There exists a real $u_0\in(0,\pi)$, a $C^3$ family of complex
%numbers $(\lambda_u)_{u\in[-u_0,u_0]^2}$, two families
%$(Pi_u)_{u\in[-u_0,u_0]^2}$ and $(N_u)_{u\in[-u_0,u_0]^2}$ of %continuous operators of $\mathcal V$ such that
%\begin{itemize}
%\item for all $u\in[-u_0,u_0]^2$, we have $P_u^n=\lambda_u^n\Pi_u
%  +N_u^n$;
%\item there exists $\theta\in(0,1)$ such that
%\begin{equation}
%\sup_{u\in[-u_0,u_0]^2}\Vert N_u^n\Vert_{\mathcal V'}+
%  \sup_{u\in[-\pi,\pi]^2\setminus[-u_0,u_0]^2}
%    \Vert N_u^n\Vert_{\mathcal V'}=O(\theta^n);
%\end{equation}
%\item we have $\lambda_u=1-\frac 12\langle\Sigma^2 u,u\rangle+O(|u|^3)%$;
%\item there exists $a_0>0$ such that, for any $u\in[-u_0,u_0]^2$,
%\begin{equation}
%|\lambda_u|\le e^{-a_0|u|^2}\ \ \mbox{and}\ \ 
%     e^{-\frac 12\langle\Sigma^2 u,u\rangle}\le e^{-a_0|u|^2}.
%\end{equation}
%\end{itemize}
%\end{prop}
\begin{equation}\label{normebornee}
K_1:=\sup_{u\in[-\pi,\pi^2]}\Vert  P_u^k\Vert <\infty,
\end{equation}
\begin{equation}\label{intnorme}
\exists K>0,\ \ \forall k\ge 1,\ \ \forall h\in\mathcal V,\ \ \ 
(2\pi)^{-2}\int_{[-\pi,\pi]^2}\Vert  P_u^k(h)\Vert\, du   \le \frac{K\Vert  h\Vert }k.
\end{equation}
The following result generalizes Proposition 3 of \cite{FP09b}.
\begin{prop}\label{LEM}
For any $p>1$, 
there exist $C>0$ and $b>0$ such that for any nonnegative
integers $k,n,r,m$,
any $N_1,N_2\in\mathbb Z^2$,
any $A_1,A_2,A_3\subseteq\bar M$ union of components of $\xi_{-k}^k$,
and any
$B\subseteq \bar M$ union of component of $\xi_{-k}^\infty$, we have
$$\left|\cov_{\bar\mu}({\mathbf 1}_{A_1\cap\{S_n=N_1\}\cap \bar T^{-n}A_2},
{\mathbf 1}_{A_3\cap\{S_r=N_2\}\cap\bar T^{-r}B}\circ \bar T^{n+m})\right|
    \le\frac{C\min(1, e^{-am+bk})}{nr}.$$
\end{prop}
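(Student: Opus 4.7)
The strategy follows the Nagaev--Guivarc'h method used in \cite{SzV,FP09b}: Fourier inversion over the cell-shift variables, lift to Young's tower, and spectral estimates for the twisted transfer operators $P_u$.

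By Fourier inversion, $\mathbf 1_{\{S_n=N_1\}}=(2\pi)^{-2}\int_{[-\pi,\pi]^2} e^{-i\langle u_1,N_1\rangle}e^{i\langle u_1,S_n\rangle}\,du_1$ and similarly for $\{S_r=N_2\}$, which rewrites
$$\cov_{\bar\mu}(\cdots)=(2\pi)^{-4}\iint e^{-i\langle u_1,N_1\rangle-i\langle u_2,N_2\rangle}\,J(u_1,u_2)\,du_1\,du_2,$$
where $J(u_1,u_2)$ is the $\bar\mu$-covariance of $f_1(u_1)=\mathbf 1_{A_1}e^{i\langle u_1,S_n\rangle}\mathbf 1_{A_2}\circ\bar T^n$ and $f_2(u_2)\circ\bar T^{n+m}$ with $f_2(u_2)=\mathbf 1_{A_3}e^{i\langle u_2,S_r\rangle}\mathbf 1_B\circ\bar T^r$. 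It suffices to bound $|J(u_1,u_2)|$ by an integrable function of $(u_1,u_2)$.

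Since the $A_i$ are unions of elements of $\xi_{-k}^k$ and $B$ of $\xi_{-k}^{+\infty}$, after pre-composition with $\bar T^k$ the corresponding indicators become $\xi_0^{2k}$- or $\xi_0^{+\infty}$-measurable, so by \refeq{norm0} they lift to $\hat g_1,\hat g_2,\hat g_3,\hat g_B\in\mathcal V$ with $(\beta_0,0)$-norm at most $1+2\beta_0^{-2k}\le Ce^{bk}$ for an appropriate $b>0$. Shifting the covariance by $\bar T^k$ (using $\bar T$-invariance of $\bar\mu$), passing to $(\hat M,\hat\mu,\hat T)$ through the factor map, and iterating the transfer-operator duality $\int F\,G\!\circ\!\hat T^j\,d\hat\mu=\int P^j(F)\,G\,d\hat\mu$ together with \refeq{Puk}, one expresses
$$\int f_1\,f_2\!\circ\!\bar T^{n+m}\,d\bar\mu=\int \hat g_B\cdot P_{u_2}^r\!\Bigl(\hat g_3\cdot P^m\!\bigl(\hat g_2\cdot P_{u_1}^n(\hat g_1)\bigr)\Bigr)d\hat\mu,$$
the product of expectations being the same expression with the inner $P^m$ replaced by the one-dimensional projection $\hat f\mapsto E_{\hat\mu}[\hat f]$. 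Subtracting gives
$$J(u_1,u_2)=\int \hat g_B\cdot P_{u_2}^r\!\Bigl(\hat g_3\cdot (P^m-E_{\hat\mu})\!\bigl(\hat g_2\cdot P_{u_1}^n(\hat g_1)\bigr)\Bigr)d\hat\mu.$$

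The spectral gap \refeq{expo} bounds $P^m-E_{\hat\mu}$ by $K_0 e^{-am}$ on $\mathcal V$; \refeq{normebornee} controls the twisted iterates pointwise in $u$; \refeq{lem10} combines the $\hat g_\bullet$'s contributing an overall factor $Ce^{bk}$; and H\"older's inequality with exponents $p,q$ together with \refeq{normeq} handles the $L^1$-pairing with $\hat g_B$. Integrating the resulting pointwise estimate over $(u_1,u_2)\in[-\pi,\pi]^2\times[-\pi,\pi]^2$ and applying \refeq{intnorme} twice produces the crucial $1/(nr)$, yielding $|\cov_{\bar\mu}|\le Ce^{bk-am}/(nr)$. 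The competing bound $C/(nr)$ (which dominates when $e^{bk-am}\ge 1$) is obtained by estimating $J(u_1,u_2)$ without the spectral-gap step, using $\Vert P^m\Vert_{L^p\to L^p}\le 1$ and the local limit bounds \refeq{eqprop4}--\refeq{eqprop4bis} for the marginal masses. The principal obstacle is the two-sided nature of $\xi_{-k}^k$: Young's tower encodes forward-oriented cylinders, so one must absorb the $k$ backward coordinates by pre-composition with $\bar T^k$, which doubles the cylinder depth to $2k$ and inflates the Banach-space norms of the lifts to $O(\beta_0^{-2k})$, producing the final $e^{bk}$ factor; carefully commuting the extra $P^k$ past the twisted iterates $P_{u_j}^\bullet$ via \refeq{Puk} is what requires the most careful bookkeeping.
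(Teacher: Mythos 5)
Your plan is essentially the paper's own proof: shift by $\bar T^{k}$ so that the $\xi_{-k}^{k}$-sets (resp.\ the $\xi_{-k}^{\infty}$-set) become forward cylinders of depth $2k$ (resp.\ $\infty$), Fourier-decompose the indicators $\mathbf 1_{\{S_\bullet=N_\bullet\}}$, lift to Young's tower, insert the spectral gap \refeq{expo} in the middle, and chain \refeq{norm0}, \refeq{lem10}, \refeq{normebornee}, \refeq{intnorme} and \refeq{normeq} to produce the factor $e^{-am+bk}/(nr)$. Your bookkeeping of the interleaved $P^k$'s and the indices $n-k$, $r-k$, $m-k$ is only sketched, but the structure and all the key estimates match; the paper also treats separately the degenerate cases $n\le 2k$ or $r\le 2k$ by noting that, say, $A_1\cap\{S_n=N_1\}\cap\bar T^{-n}A_2$ is then $\xi_{-k}^{3k}$-measurable, a case you leave implicit.

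One point in your competing bound $C/(nr)$ deserves more care. As you describe it — ``estimating $J(u_1,u_2)$ without the spectral-gap step'' — the integrand still involves the lifted cylinder indicators $\hat g_1,\hat g_2,\hat g_3$, whose $(\beta_0,0)$-norms are of size $\beta_0^{-2k}$; pushing them through \refeq{lem10} and \refeq{intnorme} therefore yields $e^{bk}/(nr)$, not $1/(nr)$, and $\min(1,e^{-am+bk})$ may equal $1$ while $e^{bk}$ is large. The paper sidesteps this by first writing
$$|C_{n,m,r}|\le \left|\cov_{\bar\mu}\bigl(\mathbf 1_{\{S_n=N_1\}},\,\mathbf 1_{\{S_r=N_2\}}\circ\bar T^{n+m}\bigr)\right|+2\,\bar\mu(S_n=N_1)\,\bar\mu(S_r=N_2),$$
i.e.\ by discarding $A_1,A_2,A_3,B$ altogether (bounding their indicators by $1$), and then applying the estimate already proved with $k=0$ to the covariance and the local limit theorem to the masses. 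Your mention of ``local limit bounds for the marginal masses'' is the right idea for the product term, but you should make explicit that the $A_i$ and $B$ must be dropped before lifting, so that no $e^{bk}$ factor survives.
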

\begin{proof}
First, we assume that $2k<\min(n,r)$ and $m>6k$.
Let us write
$$C_{n,m,r}:=\cov_{\bar\mu}({\mathbf 1}_{A_1\cap\{S_n=N_1\}\cap\bar T^{-n}A_2},
{\mathbf 1}_{A_3\cap\{S_r=N_2\}\cap\bar T^{-r}B}\circ\bar T^{n+m}).$$
Observe that $\bar T^{-k}A_i$ is a union of components of $\xi_0^{2k}$
and that $\bar T^{-k}B$ is a union of components of $\xi_0^\infty$.
Let $\hat A_i:=\hat\pi\tilde\pi^{-1}\bar T^{-k}A_i$ 
and $\hat B:=\hat\pi\tilde\pi^{-1}\bar T^{-k}B$.
These sets are measurable and
satisfy $\tilde\pi^{-1}\bar T^{-k}A_i=\hat\pi^{-1}\hat A_i$
and $\tilde\pi^{-1}\bar T^{-k}B=\hat\pi^{-1}\hat B$.
So
\begin{eqnarray*}
C_{n,m,r}&=&{\cov}_{\hat\mu}({\mathbf 1}_{\hat A_1}
  {\mathbf 1}_{\hat S_n=N_1}\circ\hat T^k{\mathbf 1}_{\hat A_2}
 \circ \hat T^n,
({\mathbf 1}_{\hat A_3}{\mathbf 1}_{\hat S_r=N_2}\circ\hat T^k
  {\mathbf 1}_{\hat B}\circ \hat T^{r})\circ \hat T^{n+m})\\
&=&\frac 1{(2\pi)^4}\int_{([-\pi;\pi]^2)^2}
e^{-i\langle u,N_1\rangle}e^{-i\langle t,N_2\rangle}\\
&\ &\times {\cov}_{\hat\mu}({\mathbf 1}_{\hat A_1}
  e^{i\langle u,\hat S_n\rangle}\circ\hat T^k{\mathbf 1}_{\hat A_2}
 \circ \hat T^n,({\mathbf 1}_{\hat A_3}e^{i\langle t,\hat S_r\rangle}
  \circ\hat T^k{\mathbf 1}_{\hat B}\circ
 \hat T^{r})\circ \hat T^{n+m})\, dudt.
\end{eqnarray*}
Now, due to \refeq{Puk},
the covariance appearing in this last integral can
be rewritten
$${\mathbb E}_{\hat\mu}[ P_t^k({\mathbf 1}_{\hat B} P_t^{r-k}
( P^k({\mathbf 1}_{\hat A_3} P^{m-k}(g_u-{\mathbb E}_{\hat\mu}[g_u]))))], $$
with $g_u:= P_u^k({\mathbf 1}_{\hat A_2} P_u^{n-k}
( P^k({\mathbf 1}_{\hat A_1})))$.
Since $\Vert P_t\Vert _{L^1(\bar\mu)}\le 1$,  we obtain
\begin{eqnarray*}
|C_{n,m,r}|&\le& (2\pi)^{-4}\int_{([-\pi,\pi]^2)^2}
{\mathbb E}_{\hat\mu}[|{\mathbf 1}_{\hat B} P_t^{r-k}
( P^k({\mathbf 1}_{\hat A_3} P^{m-k}(g_u-{\mathbb E}_{\hat\mu}[g_u])))|]\, dtdu\\
&\le& (2\pi)^{-4}\int_{([-\pi,\pi]^2)^2}C_0\bar\mu(B)^{\frac 1p}
\Vert  P_t^{r-k}
( P^k({\mathbf 1}_{\hat A_3} P^{m-k}(g_u-{\mathbb E}_{\hat\mu}[g_u])))\Vert \, dtdu\ \ \mbox{by \eqref{normeq}}\\
&\le& (2\pi)^{-2}\int_{[-\pi,\pi]^2}\bar\mu(B)^{\frac 1p}
\frac{KC_0}{r-k} K_1(3\beta_0^{-2k})
\Vert  P^{m-k}(g_u-{\mathbb E}_{\hat\mu}[g_u])))\Vert \, du
\mbox{ by \eqref{intnorme}\eqref{normebornee}\eqref{lem10}\eqref{norm0}} \\
&\le& (2\pi)^{-2}
\bar\mu(B)^{\frac 1p}
\frac{KC_0}{r-k} K_1(3\beta_0^{-2k})
\int_{[-\pi,\pi]^2}K_0e^{-a(m-k)}
\Vert  g_u\Vert \, du \ \mbox{by \refeq{expo}}\\
&\le& (2\pi)^{-2}\bar\mu(B)^{\frac 1p}
\frac{KC_0}{r-k} K_1(3\beta_0^{-2k})K_0e^{-a(m-k)}
\int_{[-\pi,\pi]^2}
\Vert   P_u^k({\mathbf 1}_{\hat A_2} P_u^{n-k}
( P^k({\mathbf 1}_{\hat A_1})))  \Vert \, du\\
&\le& \bar\mu(B)^{\frac 1p}
\frac{K^2C_0}{(r-k)(n-k)} (3K_1\beta_0^{-2k})^3K_0e^{-a(m-k)}
\ \mbox{by \refeq{normebornee}\eqref{intnorme}\eqref{lem10}\eqref{norm0}}\\
&\le& \frac{\hat C_0 e^{-am+b_0k}}{nr},
\end{eqnarray*}
for some $b_0>0$.
We still assume that $m>6 k$.
When $n\le 2k$ and $r>2 k$, we observe that $A_1\cap\{S_n=N_1\}\cap \bar T^{-n}A_2$
is a union of components of $\xi_{-k}^{3k}$, using the same argument
we obtain an upper bound in $\hat C_0 e^{-am+3b_0k}/r$
which is less than $\hat C_1 e^{-am+4b_0k}/(nr)$ for some $\hat C_1>0$. Treating analogously the cases $(r\le 2k;\ 2k < n)$ and  $(n\le 2k;\ r \le 2k)$, we obtain the following bound
\begin{equation}\label{interm}
|C_{n,m,r}|\le\frac{\hat C e^{-am+bk}}{nr},
\ \ \mbox{for some }\hat C>0\mbox{ and some } b\ge 6a>0.
\end{equation}
%Hence, we obtain that for any integers $k,n,r,m\ge 0$ such that
%$m\ge r$ and $am\ge 2bk$, we have
%\begin{equation}\label{interm}
%|C_{n,m,r}|\le\frac{\hat C\bar\mu(B)^{\frac 1p} e^{-\frac{a}2m}}{nr}.
%\end{equation}
Assume now that $am\le bk$ (this is true if $m\le 6k$). Then,
due to the fact that $|\cov_{\bar\mu}(f,g)|\le 
|{\mathbb E}_{\bar\mu}[fg]|+|{\mathbb E}_{\bar\mu}[f]{\mathbb E}
_{\bar\mu}[g]|$, we have
\begin{eqnarray*}
|C_{n,m,r}|
&\le&\bar\mu(S_n=N_1;S_r\circ \bar T^{n+m}=N_2)+\bar\mu(S_n=N_1)\bar\mu(S_r=N_2)\\
&\le&|\cov_{\bar\mu}(\mathbf 1_{S_n=N_1},{\mathbf 1}_{S_r=N_2}\circ\bar T^{n+m})|+2\bar\mu(S_n=N_1)\bar\mu(S_r=N_2)
\le\frac{\hat C_2}{nr},
\end{eqnarray*}
using estimation (\ref{interm}) with $k=0$ and the local limit theorem for $S_n$ (see \cite{SzV} or  \refeq{eqprop4}).
\end{proof}
%\begin{proof}
%Observe that $\bar T^{-k} A$ is a union of elements of$\xi_0^{2k}$ and that $\bar T^{-k} B$ is a union of elements of $\xi_0^{+\infty}$. Hence, there exists $\hat A,\hat B\subseteq\bar M$ such that $\mathbf 1_{A}\circ\bar T^k\circ\tilde\pi={\mathbf 1}_{\hat A}\circ\hat\pi$. We proved in \cite[page 835]{FP09b} that $\Vert\hat P_u^{k}P^k({\mathbf 1}_{\hat A})\Vert$ is bounded (uniformly in $r$ and in $A$). We have $$\bar\mu(A\cap\{S_{n}=N\}\cap\bar T^{-n}B)   =\frac{1}{(2\pi)^2}\int_{[-\pi;\pi]^2}e^{-i\langle u,N\rangle}{\mathbb E}_{\hat\mu}  [P_u^k({\mathbf 1}_{\hat B}P_u^{n-2k}(g_u)]\, du,$$with $g_u:=P_u^kP^k\mathbf 1_{\hat A}$.Due to Proposition \ref{spectral}, we have$\bar\mu(A\cap\{S_{n}=N\}\cap\bar T^{-n}B)$\begin{eqnarray*}&=&\frac{1}{(2\pi)^2}\int_{[-u_0;u_0]^2}e^{-i\langle u,N\rangle}{\mathbb E}_{\hat\mu}  [P_u^k({\mathbf 1}_{\hat B}P_u^{n-2k}(g_u)]\, du+O(\theta^{n-2k})\\&=&\end{eqnarray*}\end{proof}We will need the following result\begin{prop}\begin{multline*}\bar\mu\left(A_1;S_{k_1}=x;\bar T^{-k_1}A_2;     S_{k_1+k_2}=x+y;\bar T^{-(k_1+k_2)}A_3;S_{k_1+k_2+k_3}=x+y+z;     \bar T^{-(k_1+k_2+k_3)}A_4\right) \\=e\end{multline*}\end{prop}
%
%
%
%
%
\section{Estimate of the variance of $V_n$}\label{variance}
Recall that $\Sigma^2$ is invertible. 
In particular, there exists $\tilde a_0$ such that
$\langle (\Sigma^2)^{-1}x,x\rangle\ge 2\tilde a_0|x|^2$
for every $x\in\mathbb R^2$.
Comparing
$$\sum_{x\in\mathbb Z^2\ :\ |x|\le am}e^{-\frac{\langle(\Sigma^2)^{-1} x,x\rangle}{2m}}\ \ \mbox{with}\ \ \int_{|u|\le am} 
e^{-\frac{\langle(\Sigma^2)^{-1} u,u\rangle}{2m}}\, du,$$
we obtain the 
following useful formula
\begin{equation}\label{clef}
\sup_{||S_1||_\infty \le a\le 3||S_1||_\infty}
  \left|\sum_{x\in\mathbb Z^2\ :\ |x|\le am}e^{-\frac{\langle(\Sigma^2)^{-1} x,x\rangle}{2m}} -
 2\pi m\sqrt{\det\Sigma^2}\right|=O(\sqrt{m}).
\end{equation}
\begin{proof}[Proof of Proposition \ref{prop1}]
As in \cite{Bol}, the proof of Proposition \ref{prop1} is based on the following formula
$$
\var (V_n)=4\sum_{1\le k_1<\ell_1 \le n}\sum_{1\le k_2<\ell_2 \le n}D_{k_1,\ell_1,k_2,\ell_2}
= 8A_1+8A_2+8A_3+4A_4,
$$
with 
$D_{k_1,\ell_1,k_2,\ell_2}:=\bar\mu(E_{k_1,\ell_1}\cap E_{k_2,\ell_2})
       -\bar\mu(E_{k_1,\ell_1})\bar\mu( E_{k_2,\ell_2})$ and 
$$A_1:=\sum_{1\le k_1<\ell_1\le k_2<\ell_2\le n}D_{k_1,\ell_1,k_2,\ell_2},\ 
A_2:=\sum_{1\le k_1\le k_2<\ell_1\le\ell_2\le n}D_{k_1,\ell_1,k_2,\ell_2},$$ 
$$A_3:=\sum_{1\le k_1< k_2<\ell_2<\ell_1\le n}D_{k_1,\ell_1,k_2,\ell_2}, \ 
A_4:=\sum_{1\le k_1<\ell\le n}[\bar\mu(E_{k_1,\ell})
       -(\bar\mu(E_{k_1,\ell}))^2].$$
We use the notations and ideas of the proof of Proposition \ref{prop2}. 
Let $p\in(1,2)$. We take $m,k$ such that $m^2=\delta^{-k}=n^{1/100}$.
We have
$$
\bar\mu(E_{k_1,\ell_1}\cap E_{k_2,\ell_2})=\sum_{\mathcal C,\mathcal C'\in\mathcal P_m}
\bar\mu(\mathcal C\cap E_{0,\ell_1-k_1}\cap \bar T^{-(k_2-k_1)}(
\mathcal C'\cap E_{0,\ell_2-k_2})).$$
As in the proof of Proposition \ref{prop2}, we approximate
$\mathcal C\cap E_{0,r}$ by $\tilde E_{r,\mathcal C}$.
See \refeq{EnC} and \refeq{DnC} for the definition
of $\tilde E_{r,\mathcal C}$ and of $\tilde{\mathcal D}_{r,\mathcal C}$.
We recall that
$(\mathcal C\cap E_{0,r})\triangle \tilde E_{r,\mathcal C}
\subseteq  \tilde{\mathcal D}_{r,\mathcal C}$ and that, according to
\refeq{eqprop4bis}, if
$r\ge 3k$, we have (for $p>1$ large enough)
\begin{equation}\label{mesE}
\bar\mu(\tilde E_{r,\mathcal C})=O\left(\frac{m^{-2}}r+
     \frac{k m^{-2/p}}{r^{\frac 32}}\right)=O(m^{-2}r^{-1})=O(r^{-1}n^{-\frac{1}{100}})
\end{equation}
and
\begin{equation}\label{mesD}
\bar\mu(\tilde{\mathcal D}_{r,\mathcal C})\le\frac{m^{-2} 
\delta^k}{r}+\frac{k m^{-2/p}}{r^{\frac 32}}=O(m^{-2}r^{-1}\delta^k)=O(r^{-1}n^{-\frac{2}{100}}).
\end{equation}
\begin{itemize}
\item \underline{Control of $A_1$}. 
We have

$|\cov_{\bar\mu}({\mathbf 1}_{\mathcal C\cap E_{0,r}},
      {\mathbf 1}_{\mathcal C'
       \cap E_{0,s}}\circ  \bar T^{r+\ell})-
    \cov_{\bar\mu}({\mathbf 1}_{\tilde E_{r,\mathcal C}},
        {\mathbf 1}_{\tilde E_{s,\mathcal C'}}\circ \bar T^{r+\ell})|
   \le$
\begin{multline*}
\le |\cov_{\bar\mu}({\mathbf 1}_{\tilde E_{r,\mathcal C}\cup
      \tilde{\mathcal D}_{r,\mathcal C}},
 {\mathbf 1}_{\tilde{\mathcal D}_{s,\mathcal C'}}\circ\bar  T^{r+\ell}))|+
  |\cov_{\bar\mu}({\mathbf 1}_{\tilde{\mathcal D}_{r,\mathcal C}},
   {\mathbf 1}_{\tilde E_{s,\mathcal C'}\cup
      \tilde{\mathcal D}_{s,\mathcal C'}}\circ \bar T^{r+\ell})|\\
   +2{\bar\mu}({\tilde E_{r,\mathcal C}\cup
      \tilde{\mathcal D}_{r,\mathcal C}})\bar\mu
  ({\tilde{\mathcal D}_{s,\mathcal C'}})+
  2{\bar\mu}({\tilde{\mathcal D}_{r,\mathcal C}})\bar\mu
   ({\tilde E_{s,\mathcal C'}\cup
      \tilde{\mathcal D}_{s,\mathcal C'}}).
\end{multline*}
Now, due to \refeq{tildeE}, \refeq{tildeD}, 
applying Proposition \ref{LEM} (together with \refeq{mesE}
and \refeq{mesD}), we obtain
$$
\sum_{\mathcal C}\sum_{\mathcal C'}
|\cov_{\bar\mu}({\mathbf 1}_{\mathcal C\cap E_{0,r}},
      {\mathbf 1}_{\mathcal C'
       \cap E_{0,s}}\circ\bar T^{r+\ell})|
\le m^4
     \frac{C\min(1, e^{-a\ell+bk})}{rs}+\frac{Cn^{-\frac 1{100}}}
    {rs},
$$
and so (considering separately the sums over $\ell$
such that $a\ell\ge 2bk$ and $a\ell<2bk$ )
\begin{equation}\label{A1}
A_1=\sum_{k_1\ge 1,r>0,\ell\ge 1,s>0:k_1+r+\ell+s\le n}
\cov_{\bar\mu}({\mathbf 1}_{ E_{0,r}},
      {\mathbf 1}_{
        E_{0,s}}\circ\bar T^{r+\ell})=O(n^{2-\frac 1{100}}\log^2 n).
\end{equation}
\item \underline{Control of $A_2$}.
Notice that
$$A_2=\sum_{k_1+r+\ell+s\le n}
\cov_{\bar\mu}(E_{0,r+\ell},E_{r,r+\ell+s}) $$
(where the sum is also taken over $k_1\ge 1$, $r\ge 0$, $\ell\ge 1$,
$s\ge 0$).
According to Proposition \ref{prop2}, we have
$\bar\mu(E_{0,r})= \frac c {2r}+O(r^{-1-\eta})$ with $\eta>0$.
A direct computation (see Lemma \ref{A20}) gives
$\sum_{k_1+r+\ell+s\le n}\frac 1{(r+\ell)(\ell+s)}\sim\frac{\pi^2}{12}n^2$. 
Hence
\begin{equation}
\sum_{k_1+r+\ell+s\le n}\bar\mu(E_{0,r+\ell})
\bar\mu(E_{0,\ell+s})\sim\frac{\pi^2}{12}\frac{c^2}4n^2.
\end{equation}
Now, let us prove that
\begin{equation}\label{A2terme1}
\sum_{k_1+r+\ell+s\le n}\bar\mu(E_{0,r+\ell}\cap
 E_{r,r+\ell+s})\sim J\frac{c^2}4n^2.
\end{equation}
From which we conclude that
\begin{equation}\label{A2}
A_2\sim\left(J-\frac{\pi^2}{12}\right)\frac{c^2}4n^2.
\end{equation}
We have to estimate
$C^{(2)}_{r,\ell,s}:= \bar\mu(E_{0,r+\ell}\cap E_{r,r+\ell+s})$.
Given $\mathcal C,\mathcal C'\in \mathcal P_m$, we
consider the set
$\mathcal E_{r,\ell,s,\mathcal C,\mathcal C'}^{(2)}:=
 E_{0,r+\ell}\cap E_{r,r+\ell+s}\cap\mathcal C\cap
 \bar T^{-r}\mathcal C'$
which we approximate by
$\tilde{\mathcal E}_{r,\ell,s,\mathcal C,\mathcal C'}^{(2)}:=
 \tilde E_{r+\ell,\mathcal C}\cap  \bar T^{-r} \tilde E_{\ell+s,\mathcal C'}$.
We notice that
\begin{equation}\label{diffsym2}
\mathcal E_{r,\ell,s,\mathcal C,\mathcal C'}^{(2)}\triangle\tilde{\mathcal E}_{r,\ell,s,\mathcal C,\mathcal C'}^{(2)}\subseteq 
(\tilde  {\mathcal D}_{r+\ell,\mathcal C}\cap\bar T^{-r} (
\tilde {\mathcal D}_{\ell+s,\mathcal C'}\cup \tilde E_{\ell+s,\mathcal C'}))\cup ((\tilde  {E}_{r+\ell,\mathcal C}
\cup \tilde  {\mathcal D}_{r+\ell,\mathcal C})\cap\bar T^{-r} \tilde {\mathcal D}_{\ell+s,\mathcal C'}).
\end{equation}
Observe that $\bar\mu(\tilde{\mathcal E}_{r,\ell,s,\mathcal C,
\mathcal C'}^{(2)})$
is equal to the following sum
$\sum_x G_{r,\ell,s,\mathcal C,\mathcal C',x}$
(where $\sum_x$ means the sum over the $x\in\mathbb Z^2$
such that $|x|\le\min(r,\ell+1,s+2)\Vert S_1\Vert_\infty$)
with 
\begin{multline}\label{sumA2}
G_{r,\ell,s,\mathcal C,\mathcal C',x}:=
\sum_{|N|,|N|'\le \Vert S_1\Vert_\infty}\bar\mu 
(\tilde{\mathcal C}\cap\{S_r=x\}\cap\bar T^{-r}(
   \tilde{\mathcal C}'\cap\{S_\ell=N-x\}\cap\\
  \cap\bar T^{-\ell}
    (\bar M\cap(\tilde V-N)\cap\{S_s=x+N'-N\}\cap\bar T^{-s}
(\bar M\cap(\tilde V'
     -N')))),
\end{multline}
and where $\tilde{\mathcal C}$, $\tilde{\mathcal C}'$,
$\tilde V$ and $\tilde V'$ are the $\xi_{-k}^k$-measurable sets
such that
$\tilde E_{r+\ell,\mathcal C}=\tilde{\mathcal C}\cap T^{-r-\ell}\tilde{ V}$ and $\tilde E_{\ell+s,\mathcal C}=\tilde{\mathcal C}'\cap T^{-\ell-s}\tilde{V}'$ (see \refeq{EnC}).
Due to (\ref{diffsym2}), we have
\begin{equation}\label{sumA2err}
\bar\mu(\mathcal E_{r,\ell,s,\mathcal C,\mathcal C'}^{(2)}\triangle\tilde{\mathcal E}_{r,\ell,s,\mathcal C,\mathcal C'}^{(2)})\le \mathcal \sum_x (S_{1,x}+\mathcal S_{2,x}),
\end{equation}
where $\mathcal S_{1,x}$ (resp. $\mathcal S_{2,x}$) is obtained from
(\ref{sumA2}) by replacing $\tilde{ V}$ and 
$\tilde{ V}'$ by $\tilde{\mathcal D}_{\mathcal C}$ and
$\tilde{ V}'\cup \tilde{\mathcal D}_{\mathcal C'}$
(resp. by $\tilde{ V}\cup \tilde{\mathcal D}_{\mathcal C}$
and $\tilde{\mathcal D}_{\mathcal C'}$), with the notation
$\tilde{\mathcal D}_{\mathcal C}$ introduced in \refeq{DnC}. 
To estimate $\bar\mu(\tilde{\mathcal E}_{r,\ell,s,\mathcal C,
\mathcal C'}^{(2)})$
and
$\bar\mu(\mathcal E_{r,\ell,s,\mathcal F}^{(2)}\triangle\tilde{\mathcal E}_{r,\ell,s,\mathcal C,\mathcal C'}^{(2)})$, we will apply \refeq{eqprop4} three successive times to each summand appearing in (\ref{sumA2})
and in (\ref{sumA2err}).

We start with the study of (\ref{sumA2}). According
to \refeq{eqprop4} and since $|N|,|N'|\le \Vert S_1\Vert_\infty$, when $r,\ell,s\ge 3k$, the quantity given by
(\ref{sumA2}) is equal to
\begin{equation}\label{A11}
\frac{\bar\mu(\tilde{\mathcal C})\bar\mu(\odot)}{\sqrt{\det\Sigma^2}
2\pi r}e^{-\frac{\langle (\Sigma^2)^{-1}x,x\rangle}{2r}}+e_1,
\end{equation}
with
\begin{equation}\label{A12}
\bar\mu(\odot)=\frac{\bar\mu(\tilde{\mathcal C}')\bar\mu(\otimes)}{\sqrt{\det\Sigma^2}
2\pi \ell}e^{-\frac{\langle (\Sigma^2)^{-1}x,x\rangle}{2\ell}}+e_2,
\end{equation}
\begin{equation}\label{A13}
\bar\mu(\otimes)=\frac{\bar\mu(\tilde {V})\bar\mu(\tilde{ V}')}{\sqrt{\det\Sigma^2}
2\pi s}e^{-\frac{\langle (\Sigma^2)^{-1}x,x\rangle}{2s}}+e_3,
\end{equation}
the error terms being estimated by
$$|e_1|\le\tilde K_1k \frac{\bar\mu(\odot)^{\frac 1p}}{r^{\frac32}},\ \ \ \ 
|e_2|\le\tilde K_1k \frac{\bar\mu(\otimes)^{\frac 1p}}{\ell^{\frac 32}}\ \mbox{and}\ \ \ \ |e_3|\le\tilde K_1ks^{-\frac 32} ,$$
for some $\tilde K_1>1$.
So the contribution to $A_2$ of the three dominating terms in \refeq{A11}, \refeq{A12} and \refeq{A13} is
(where $\sum^+$ means the sum restricted to $k_1\ge 1$,
$\min(r,s,\ell)\ge 3k$):
$$\sum^+_{k_1+r+\ell+s\le n} \sum_x\sum_{\mathcal C,\mathcal C'}
\frac{\bar\mu(\tilde{\mathcal C})\bar\mu(\tilde{\mathcal C}')
\bar\mu(\tilde{ V})\bar\mu(\tilde{ V}')}{{(\det\Sigma^2)}^{\frac 32}
(2\pi)^3 r\ell s}e^{-\frac 12{\langle (\Sigma^2)^{-1}x,x\rangle}
(\frac 1r+\frac 1\ell+\frac 1s)}.$$
$$
= \sum_{\mathcal C,\mathcal C'}
\frac{4\mathbb E_{\bar\mu}[\tau{\mathbf 1}_{\mathcal C}]
\mathbb E_{\bar\mu}[\tau{\mathbf 1}_{\mathcal C'}](1+o(1))}
{(\det\Sigma^2)^{\frac 32}
(2\pi)^{3}(\sum_i|\partial O_i|)^2}
 \sum^+_{k_1+r+\ell+s\le n}\sum_x\frac{e^{-\frac 12{\langle (\Sigma^2)^{-1}x,x\rangle}
(\frac 1r+\frac 1\ell+\frac 1s)}}{ r\ell s}.
$$
Since
$1/\min(r,\ell,s)\le\frac 1r+\frac 1\ell+\frac 1s\le
   3/\min(r,\ell,s)$, due to \refeq{clef}, we have
\begin{eqnarray*}
\sum^+_{k_1+r+\ell+s\le n}\sum_x\frac{e^{-\frac 12{\langle (\Sigma^2)^{-1}x,x\rangle}
(\frac 1r+\frac 1\ell+\frac 1s)}}{ 
  2\pi\sqrt{\det\Sigma^2}r\ell s}
&=&
\sum^+_{k_1+r+\ell+s\le n}\left(
\frac{1}{r\ell+rs+s\ell} +\frac{O\left(\sqrt{\min(r,\ell,s)}\right)}{r\ell s}\right)\\
&=& O(n^{\frac 32})+\sum^+_{k_1+r+\ell+s\le n}\frac{1}{r\ell+rs+s\ell}\\
&=&o(n^2)+\sum_{k_1+r+\ell+s\le n}\frac{1}{r\ell+rs+s\ell},
\end{eqnarray*}
(where the last sum is taken over $k_1,r,\ell,s\ge 1$) since
$$ \sum_{k_1,r,s=1}^n
  \sum_{\ell= 1}^{3k}\frac{1}{r\ell+rs+s\ell}
    \le O(n\log n)\sum_{r,s=1}^n\frac 1{rs}=O(n\log^3n)=o(n^2).$$
Now, according to Lemma \ref{controlA2}, we have
$$ \sum_{k_1+r+\ell+s\le n}\frac 1{r\ell+r s+s\ell}\sim  n^2 J.$$
We finally obtain that the contribution to $A_2$ 
of (\ref{sumA2}) coming from the dominating terms of
\refeq{A11}, \refeq{A12} and \refeq{A13} is
\begin{equation}
\sim  J\frac{c^2}4n^2.
\end{equation}
Now, we prove that the other contributions are in $o(n^2)$.
\begin{itemize}
\item Using the fact that $|x|\le 2\min(r,\ell,s)\Vert S_1\Vert_\infty$, 
we get that the contribution to $A_2$ of 
the term coming from the composition of the three error terms $(e_1,e_2,e_3)$ is bounded by
$$m^4\sum_{k_1+r+\ell+s\le n}
 \sum_x\frac{\tilde K_1^3k^3}{r^{\frac 32}\ell^{\frac 3{2p}}s^{\frac 3
    {2p^2}}}\le 16(\Vert S_1\Vert_\infty+1)^4
  \tilde K_1^3n^{\frac 2{100}}n k^3 \sum_{r+\ell+s\le n} 
    \frac{\min(r^2,\ell^2,s^2)}{r^{\frac 32}\ell^{\frac 3{2p}}s^{\frac 3
    {2p^2}}}.$$
$$ \le 
    O\left(n^{\frac {102}{100}}\log^3n \sum_{r,\ell,s\le n} 
    \frac{r^{\frac 23}\ell^{\frac 23}s^{\frac 23}}
   {r^{\frac 32}\ell^{\frac 3{2p}}s^{\frac 3
    {2p^2}}}\right)=O(n^{\frac{102}{100}+5-\frac{9}{2p^2}})=o(n^2),$$
if we take $p>1$ small enough.
\item Analogously, the contribution to $A_2$ of the composition of one dominating term and of two error terms of \refeq{A11}, \refeq{A12} and
\refeq{A13} is less (up to a multiplicative constant) than 
\begin{eqnarray*}
n^{\frac 2{100}}k^2
\sum_{k_1+r+\ell+s\le n}\frac{1}{r^{\frac 1{p^2}}\ell^{\frac 32} s^{\frac 3{2p}}}\sum_xe^{-\tilde a_0^2\frac{|x|^2}{rp^2}} &\le&
O\left(n
^{\frac{102}{100}}k^2\sum_{r,\ell,s\le n}\frac{\min(r,\ell^2,s^2)}
{r^{\frac 1{p^2}}\ell^{\frac 32} s^{\frac 3{2p}}}\right)\\
&\le& O\left(n
^{\frac{102}{100}}k^2\sum_{r,\ell,s\le n}\frac{r^{\frac 12}(\ell^2)^{\frac 14}(s^2)^{\frac 14}}
{r^{\frac 1{p^2}}\ell^{\frac 32} s^{\frac 3{2p}}}\right)\\
&=&O(n^{\frac{102}{100}+\frac 32-\frac 1{p^2}+\frac 32-\frac 3{2p}}\log^3n)=o(n^2),
\end{eqnarray*}
if we take $p>1$ small enough.
\item Now, the contribution to $A_2$ of the composition of two dominating terms and of one error term of \refeq{A11}, \refeq{A13} and
\refeq{A13} is less (up to a multiplicative constant) than 
$$n^{\frac{102}{100}}\sum_{r,\ell,s\le n}\frac{k^2}{r^{\frac 1{p}}\ell^{\frac 1{p}} s^{\frac 3{2}}}\sum_xe^{-\frac{\tilde a_0^2}p
{|x|^2}\frac{r+\ell}{r\ell}}.$$
On the one hand, we have
$ \sum_xe^{-a_0^2{|x|^2}\frac{r+\ell}{r\ell}}\le \min(r,\ell)$
(using the fact that $r\ell/(r+\ell)\le\min(r,\ell)$).
On the other hand, this sum is in $O(s^2)$. Therefore
the quantity we are looking at is less than
$$
O\left(n^{\frac{102}{100}}k^2
\sum_{r,\ell,s\le n}\frac{\min(r,\ell,s^2)}{r^{\frac 1{p}}\ell^{\frac 1{p}} s^{\frac 3{2}}}\right)=
O\left(n^{\frac{102}{100}}k^2
\sum_{r,\ell,s\le n}\frac{r^{\frac 38}\ell^{\frac 38}
(s^2)^{\frac 14}}
 {r^{\frac 1{p}}\ell^{\frac 1{p}} s^{\frac 3{2}}}\right)
$$
$$=O\left(n^{\frac{102}{100}+2+\frac 3{4}-\frac 2p}\log^3n\right)=o(n^2), $$
if $p>1$ is small enough.
\item 
If $r\le 4k$ or $\ell\le 4k$ or $s\le 4k$, then 
$\sum_x$ is a sum over $|x|\le 4k\Vert S_1\Vert_\infty$ and
one of the following sets is $\xi_{-5k}^{5k}$-measurable:
$$
 \tilde{\mathcal C}\cap\{S_r=x\}\cap\bar T^{-r}
   \tilde{\mathcal C}'\ \ \mbox{or}\ \ 
  \tilde{\mathcal C}'\cap\{S_\ell=N-x\}\cap\bar T^{-\ell}(\bar M\cap(\tilde V-N))$$ 
$$     \mbox{or}\ \ 
       \bar M\cap(\tilde V-N)\cap\{S_s=x+N'-N\}\cap\bar T^{-s}
    (\bar M\cap(\tilde V'-N')).$$
We then apply \refeq{eqprop4} accordingly and 
take in account the fact that the sum on $r$ or $k$ or $\ell$
must be taken on $\{1,...,4k\}$. This leads to a term in $o(n^2)$.
\item 
Finally, the estimate of (\ref{sumA2err}) follows the same lines as the
estimate of (\ref{sumA2}). We obtain an analogous estimation multiplied by $\delta^{k}$. This ensures that the contribution of (\ref{sumA2err}) to $A_2$ is in $o(n^2)$.
\end{itemize}
\item \underline{Control of $A_3$}.
We have
$$A_3=\sum_{k_1+r+\ell+s\le n}
\cov_{\bar\mu}({\mathbf 1_{E_{0,r+\ell+s}}},{\mathbf 1_{E_{0,\ell}}}\circ \bar T^r).$$
This part is the most delicate. 
Indeed the terms
$$\sum_{k_1+r+\ell+s\le n}\bar\mu(E_{0,r+\ell+s})
\bar\mu(E_{0,\ell})\ \ \ \ \mbox{and}\ \ 
\sum_{k_1+r+\ell+s\le n}\bar\mu(E_{0,r+\ell+s}\cap E_{r,r+\ell})$$
are in $n^2\log n$. But we will prove that their difference
is in $n^2$. More precisely, we show that
\begin{equation}\label{A3}
A_3\sim\frac{c^2}8 n^2.
\end{equation}
First, according to Proposition \ref{prop2}, we have
\begin{eqnarray}
\sum_{k_1+r+\ell+s\le n}\bar\mu(E_{0,r+\ell+s})
\bar\mu(E_{0,\ell}) &=&\sum_{k_1+r+\ell+s\le n}\left(\frac {c+
      O((r+\ell+s)^{-\eta})}{2(r+\ell+s)}\right)
\bar\mu(E_{0,\ell})\nonumber\\
&=&o(n^2)+\sum_{\mathcal C}\sum_{k_1+r+\ell+s\le n}\frac {c\bar\mu(E_{0,\ell}\cap\mathcal C)}{2(r+\ell+s)}\nonumber\\
&=&o(n^2)+\sum_{\mathcal C}\sum_{k_1+r+\ell+s\le n}\frac {c\bar\mu(\tilde E_{\ell,\mathcal C})+O(n^{-\frac{2}{100}}/\ell)}{2(r+\ell+s)}\nonumber\\
&=&o(n^2)+\sum_{\mathcal C}\sum_{k_1+r+\ell+s\le n}\frac {c\bar\mu(\tilde E_{\ell,\mathcal C})}{2(r+\ell+s)}.\label{A30}
\end{eqnarray}
Indeed, setting $q=\ell+r$ and $t=\ell+r+s$, we have
$$\sum_{\mathcal C}\sum_{k_1+r+\ell+s\le n}\frac {n^{-\frac{2}{100}}}{\ell(r+\ell+s)}\le
  n^{1-\frac 1{100}}\sum_{t=1}^n\frac 1t\sum_{q=1}^t\sum_{\ell=1}^q
   \frac 1{\ell}=O(n^{2-\frac 1{100}}\log n)
  =o(n^2).$$
Now, let us estimate
$\sum_{k_1+r+\ell+s\le n}\bar\mu(E_{0,r+\ell+s}
\cap E_{r,r+\ell})$ in terms of $\bar\mu(\tilde E_{\ell,\mathcal C})$.
For any $\mathcal C,\mathcal C'\in\mathcal P_m$, 
we approximate once again $\mathcal C'\cap E_{0,r+\ell+s}\cap \bar T^{-r}\mathcal C
\cap E_{r,r+\ell}$ by
\begin{equation}\label{approx}
 \tilde E_{r+\ell+s,\mathcal C'}\cap \bar T^{-r}\tilde E_{\ell,\mathcal C},
\end{equation}
the measure of which is
$\sum_xH_{r,\ell,s,\mathcal C,\mathcal C',x}$
(with $\sum_x$ being taken on the set of $x\in\mathbb Z^2$ such that
$|x|\le \min(r,s+2)\Vert S_1\Vert_\infty$) and with
\begin{multline}\label{HHH}
H_{r,\ell,s,\mathcal C,\mathcal C',x}:=
\sum_{|N|,|N'|\le L}\bar\mu(\tilde{\mathcal C}'\cap\{S_r=x\}
\cap\bar T^{-r}(\tilde{\mathcal C}\cap\{S_\ell=N\}\cap\\
    \cap\bar T
   ^{-\ell}(\bar M\cap(\tilde V-N)\cap\{S_s=N'-x-N\}\cap \bar T^{-s}(\bar M\cap(\tilde V'-N'))))) .
\end{multline}
Now, applying \refeq{eqprop4} and \refeq{eqprop4bis} (when $\min(r,s)\ge 3k$), we obtain
that this quantity is equal to
\begin{equation}\label{A31}
\frac{\bar\mu(\tilde{\mathcal C}')\bar\mu(\Diamond)}{\sqrt{\det\Sigma^2}
2\pi r}e^{-\frac{\langle (\Sigma^2)^{-1}x,x\rangle}{2r}}+e'_1,
\end{equation}
with
\begin{equation}\label{A32}
\bar\mu(\Diamond)=\frac{\bar\mu(\tilde E_{\ell,\mathcal C})\bar\mu(\tilde V')}{\sqrt{\det\Sigma^2}
2\pi s}e^{-\frac{\langle (\Sigma^2)^{-1}x,x\rangle}{2s}}+e'_2,
\end{equation}
the error terms being estimated by
$$|e'_1|\le\tilde K_1k \frac{\bar\mu(\Diamond)^{\frac 1p}}{r^{\frac32}}\ \ \  \mbox{and}\ \ 
|e'_2|\le\tilde K_1k \frac{\bar\mu(\tilde E_{\ell,\mathcal C})^{\frac 1p}}{s^{\frac 32}}.$$
We obtain that the contribution to $A_3$ of the dominating terms of
\refeq{A30}, \refeq{A31} and \refeq{A32} is
(where $\sum^*$ stands for the sum over $k_1\ge 1$, $\ell\ge 1$
and $\min(r,s)\ge 3k$)
$$
\sum_{\mathcal C,\mathcal C'} \sum^*_{k_1+r+\ell+s\le n}
\left(\frac{\bar\mu(\tilde{\mathcal C}')\bar\mu(\tilde V')\bar\mu
    (\tilde E_{\ell,\mathcal C})
   \sum_x e^{-\frac{\langle(\Sigma^2)^{-1}x,x\rangle}{2}\frac{r+s}{rs}}}{
   \det\Sigma^2(2\pi)^2rs}-\frac{c\bar\mu(\tilde{\mathcal C}')\bar\mu(\tilde E_{\ell,\mathcal C})}{2(r+\ell+s)}\right)$$
\begin{eqnarray*}
&=& \sum_{k_1+r+\ell+s\le n}^*\sum_{\mathcal C}\bar\mu(\tilde E_{\ell,
        \mathcal C})\frac c{2}
\left(\frac{(1+O(n^{-\frac 1{200}}))
   \sum_x
  e^{-\frac{\langle(\Sigma^2)^{-1}x,x\rangle}{2}\frac{r+s}{rs}}}{
   \sqrt{\det\Sigma^2} 2\pi rs}-\frac{1}{r+\ell+s}\right)\\
&=&o(n^2)+\sum_{k_1+r+\ell+s\le n}^*\frac{c^2}{4\ell}
\left(\frac{\sum_xe^{-\frac{\langle(\Sigma^2)^{-1}x,x\rangle}{2}\frac{r+s}{rs}}}{
   \sqrt{\det\Sigma^2} 2\pi rs}-\frac{1}{r+\ell+s}\right)\\
&=&o(n^2)+\sum_{k_1+r+\ell+s\le n}^*\frac{c^2}{4\ell}
\left(\frac{1}{ r+s}-\frac{1}{r+\ell+s}\right)\ \ \mbox{due to 
\refeq{clef}}\\
&=&o(n^2)+\frac{c^2}{4}\sum_{k_1+r+\ell+s\le n}^*
\left(\frac{1}{ (r+s)(r+\ell+s)}\right)\\
&=&o(n^2)+\frac{c^2}{4}\sum_{k_1,r,\ell,s\ge 1\ :\ k_1+r+\ell+s\le n}
\left(\frac{1}{ (r+s)(r+\ell+s)}\right)\\
&\sim&\frac{c^2}4n^2\int_{[0,1]^4}\frac{\mathbf{1}_{\{t+u+v+w<1\}}\, dt\, du\, dv\, dw}{(u+w)(u+v+w)}
=\frac{c^2}{8}n^2.
\end{eqnarray*}
For the third line, we used the fact that $\sum_{\mathcal C}\bar\mu
(\tilde E_{\ell,\mathcal C})=\frac c{2\ell}+O(\ell^{-1-\eta})$.
For the last line, we used the Lebesgue dominated convergence theorem and the following equalities obtained by a change of variable
($r=u+w$, $s=u+v+w$) 
and by integrating in $t$, $u$, $r$ and finally in $s$:
\begin{eqnarray*}
\int_{[0,1]^4}\frac{\mathbf{1}_{\{t+u+v+w<1\}}\, dt\, du\, dv\, dw}{(u+w)(u+v+w)}
    &=&\int_{[0,1]^4}\frac{\mathbf{1}_{\{u<r<s,t+s<1\}}\, dt\, du\, dr\, ds}{rs}\\
&=&\int_{0\le u\le r\le s\le 1}\frac{(1-s)\, du\, dr\, ds}{rs}\\
&=&\int_0^1(1-s)\, ds=\frac 12. 
\end{eqnarray*}
Now, it remains to show that the contribution to $A_3$ of all the other
terms is in $o(n^2)$.
\begin{itemize}
\item According to \refeq{mesE}, \refeq{A31} and \refeq{A32}, the contribution of the term coming from the composition of the
two
error terms $e'_1$ and $e'_2$ is in
\begin{eqnarray}
\sum_{\mathcal C,\mathcal C'}\sum_{k_1+r+\ell+s\le n}\sum_x
   k^2\frac{\bar\mu(\tilde E_{\ell,\mathcal C})^{\frac 1{p^2}}}{
       r^{\frac 32}s^{\frac 3{2p}}}
&=&4\sum_{\mathcal C,\mathcal C'}\sum_{k_1+r+\ell+s\le n}
\frac{k^2 n^{-\frac 1{100p^2}}}
  {\ell^{\frac 1{p^2}}r^{\frac 32}s^{\frac 3{2p}}}\min(r^2,s^2)\label{somme}\\
&=&4\sum_{\mathcal C,\mathcal C'}\sum_{k_1+r+\ell+s\le n}
\frac{k^2 n^{-\frac 1{100p^2}}}
  {\ell^{\frac 1{p^2}}r^{\frac 32}s^{\frac 3{2p}}}rs\nonumber\\
&=&O(n^{1+\frac 1{100}(2-\frac 1{p^2})+\frac 12+1-\frac 1{p^2}
   +2-\frac 3{2p}}\log^2n),\nonumber
\end{eqnarray}
which is not enough to conclude.
Hence, we use the estimate of $e'_2$ given by 
Remark \ref{remprop4} for $x\ge 3k$. On the one hand, the last term in the RHS of the formula given
in Remark \ref{remprop4} brings \refeq{somme}
with $s^{\frac 3{2p}}$ replaced by $ks^{\frac 2p}$, which gives $o(n^2)$ for $p>1$ small enough.
On the other hand, the first term in the RHS of the formula of 
Proposition \ref{prop4} gives still $s^{\frac 3{2p}}$, but with
$\min(r^2,s)\le s^{\frac 12}r$ instead of $\min(r^2,s^2)\le rs$.
This ensures that this term is in $o(n^2)$.
\item The contribution of the term coming from the composition of the
error term $e'_1$ of \refeq{A31} and of the dominating
term of \refeq{A32} is in

$\sum_{\mathcal C,\mathcal C'}\sum_{k_1+r+\ell+s\le n}\sum_x\frac{k}{r^{\frac 32}}
  \left(\frac{\bar\mu(\tilde E_{\ell,\mathcal C})\bar\mu(\tilde V')
     e^{-\tilde a_0\frac{|x|^2}s}}{s}\right)^{\frac 1p}$
\begin{eqnarray*}
&=&O\left(n^{\frac 2{100}}\log n\sum_{k_1+r+\ell+s\le n}\frac {\min(s,r^2)}
       {r^{\frac 32}\ell^{\frac 1 p}s^{\frac 1p}}
   \right)\\
&=&O\left(n^{1+\frac 2{100}}\log n\sum_{r+\ell+s\le n}\frac {\sqrt{r}s^{\frac 34}}
       {r^{\frac 32}\ell^{\frac 1 p}s^{\frac 1p}}\right)\\
&=&O\left((\log n)^2n^{1+\frac 2{100}+1-\frac 1p+\frac 74-\frac 1p}\right)=o(n^2),
\end{eqnarray*}

if $p>1$ is small enough 
(using the fact that $\sum_x e^{-\tilde a_0\frac{|x|^2}{ps}}=O(\min(s,r^2))$).
\item Now, the contribution of the term coming from the composition of the the dominating term of \refeq{A31} and of the error term $e'_2$
term of \refeq{A32} is in

$\sum_{\mathcal C,\mathcal C'}\sum_{k_1+r+\ell+s\le n}\sum_x
\frac{\bar\mu(\tilde{\mathcal C}')}{r}e^{-\frac{\langle(\Sigma^2)^{-1}
x,x\rangle}{2r}}k\frac{\bar\mu(\tilde E_{\ell,\mathcal C})^{\frac 1p}}
{s^{\frac 32}}=
$
\begin{eqnarray*}
&=&n^{\frac 1{100}}\log n\sum_{k_1+r+\ell+s\le n}\frac{
\min(r,s^2)}{r\ell^{\frac 1p} s^{\frac 32}}\\
&=&n^{\frac 1{100}}\log n\sum_{k_1+r+\ell+s\le n}\frac{
r^{\frac 34}(s^2)^{\frac 14}}{r\ell^{\frac 1p} s^{\frac 32}}\\
&=&n^{1+\frac 1{100}+\frac 34+1-\frac 1p}\log^2 n=o(n^2),
\end{eqnarray*}
if $p>1$ is small enough.
\item For the control of the sum over $(k_1,r,s,\ell)$ such that
$\min(r,s)<3k$, we proceed as we did for $A_2$.
\item It remains to estimate
$$\sum_{\mathcal C,\mathcal C'}\sum_{k_1+r+\ell+s\le n}(\bar\mu
(\tilde{\mathcal D}_{r+\ell+s,\mathcal C'}\cap T^{-r}(\tilde 
E_{\ell,\mathcal C}\cup \tilde 
{\mathcal D}_{\ell,\mathcal C})) +
\bar\mu
((\tilde{ E}_{r+\ell+s,\mathcal C'}\cup \tilde 
{\mathcal D}_{r+\ell+s,\mathcal C'})\cap T^{-r}\tilde 
{\mathcal D}_{\ell,\mathcal C}).$$
The dominating terms obtained by \refeq{eqprop4}
are estimated as the dominating terms of \refeq{A31} and\refeq{A32}.
They bring a contribution to $A_3$ in
$$\delta^k \sum_{k_1+r+\ell+s\le n}\frac 1{(r+s)\ell}\le
  \delta^kn^2\log n=o(n^2).$$
The fact that the other terms are in $o(n^2)$ follows
as for the study of \refeq{approx}.
\end{itemize}
\item \underline{Control of $A_4$}.
We have $A_4\le \sum_{1\le k_1<\ell\le n}\PP(E_{k_1,\ell})=O(n\log n)
=o(n^2)$.
\end{itemize}
Finally we have $\var _{\bar\mu}(V_n)\sim 8(A_2+ A_3)$.
\end{proof}
\begin{lem}\label{A20}
We have
$$\sum_{k_1\ge 1,r\ge 0,\ell\ge 1,s\ge 0:k_1+r+\ell+s\le n}\frac 1{(r+\ell)(\ell+s)}\sim\frac{\pi^2}{12}n^2.$$
\end{lem}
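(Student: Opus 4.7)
The plan is to recognize the sum as a four-dimensional Riemann sum and then to evaluate the resulting integral by reduction to dilogarithms. After summing $k_1$ explicitly (each $(r,\ell,s)$ with $r+\ell+s\le n-1$ contributes $n-r-\ell-s$ admissible values of $k_1$), the left-hand side becomes
\[
S_n=\sum_{\substack{r\ge 0,\,\ell\ge 1,\,s\ge 0\\ r+\ell+s\le n-1}}\frac{n-r-\ell-s}{(r+\ell)(\ell+s)},
\]
and setting $r=nx$, $\ell=ny$, $s=nz$ identifies this with a Riemann sum of step $1/n$ for
\[
n^2 I,\qquad I:=\int_{\substack{x,y,z\ge 0\\ x+y+z\le 1}}\frac{1-x-y-z}{(x+y)(y+z)}\,dx\,dy\,dz.
\]
The integrand is locally integrable: its singular locus $\{x=y=0\}\cup\{y=z=0\}$ is two-dimensional and the integrand is of order $1/\rho$ in polar coordinates around it, and the constraint $\ell\ge 1$ keeps the sum at distance $\ge 1/n$ from that locus. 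Splitting the sum between $\ell\ge\varepsilon n$ (where uniform continuity of the integrand yields routine Riemann-sum convergence to the corresponding piece of $I$) and $\ell<\varepsilon n$ (whose contribution is bounded by $O(\varepsilon n^{2}\log^{2} n)$ via $\sum_{r,s\le n}\frac{1}{(r+\ell)(\ell+s)}=O(\log^2 n)$ and is therefore negligible as $\varepsilon\to 0$) yields $S_n\sim n^2 I$.

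To evaluate $I$, I change variables to $u=x+y$, $v=y+z$, keeping $y$; the Jacobian is $1$ and the new domain is $(u,v)\in[0,1]^2$ with $y\in[\max(0,u+v-1),\min(u,v)]$. The inner integral of the polynomial $1-u-v+y$ over $y$ is elementary and equals $m(1-m/2-M)$ on $\{u+v\le 1\}$ and $(1-M)^2/2$ on $\{u+v>1\}$, where $m=\min(u,v)$ and $M=\max(u,v)$. Symmetrizing in $(u,v)$ and, on the second region, substituting $w=1-M$, the computation reduces to three one-variable integrals on $[0,1/2]$:
\[
\int_0^{1/2}\log\tfrac{1-w}{w}\,dw,\qquad \int_0^{1/2}w\log\tfrac{1-w}{w}\,dw,\qquad \int_0^{1/2}\tfrac{1}{1-w}\log\tfrac{1-w}{w}\,dw.
\]
The first two are elementary and produce $\log 2$-terms; the third equals $\pi^2/12$ after one integration by parts combined with Euler's identity $\mathrm{Li}_2(1/2)=\pi^2/12-(\log 2)^2/2$.

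The main obstacle is the final cancellation: each of the two sub-integrals $I|_{\{u+v\le 1\}}$ and $I|_{\{u+v>1\}}$ individually contains nontrivial contributions in $\log 2$ and $(\log 2)^2$, and only the dilogarithm identity produces the exact cancellation that leaves the clean value $I=\pi^2/12$. Everything else in the argument is routine bookkeeping.
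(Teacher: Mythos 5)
Your approach is correct but takes a genuinely different route from the paper's, both in how the Riemann sum is set up and in how the integral is evaluated.

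The paper keeps $k_1$ to the very end: it first shows $\sum_{r+\ell+s\le n}\frac{1}{(r+\ell)(\ell+s)}\sim \frac{\pi^2 n}{6}$ by passing to the integral $\int_{\{u+v+w\le 1\}}\frac{du\,dv\,dw}{(u+v)(u+w)}$, then makes the substitution $r=\min(u+v,u+w)$, $s=\max(u+v,u+w)$ keeping $u$, reducing to $\int_u^{(1+u)/2}\frac 1r\log\bigl(\frac{1+u}{r}-1\bigr)\,dr=\mathrm{Re}\bigl(Li_2(2)-Li_2(1-\tfrac 1u)\bigr)$, and finally uses $\mathrm{Re}\,Li_2(2)=\pi^2/4$ together with an explicit primitive of $u\mapsto Li_2(1-\tfrac 1u)$ to get $\pi^2/6$; summing the outer $k_1$ then gives $\pi^2n^2/12$. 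You instead eliminate $k_1$ immediately by weighting, arriving at the single three-dimensional integral $I=\int\frac{(1-x-y-z)_+}{(x+y)(y+z)}\,dx\,dy\,dz$, and you evaluate it by the substitution $(u,v,y)=(x+y,y+z,y)$ (Jacobian $1$), integrating the linear factor over $y$ to obtain a piecewise two-variable integrand, then symmetrizing and reducing to one-dimensional integrals on $[0,1/2]$. The only non-elementary piece is $\int_0^{1/2}\frac{1}{1-w}\log\frac{1-w}{w}\,dw=\tfrac 12(\log 2)^2+Li_2(\tfrac 12)=\pi^2/12$, and the $\log 2$ and $(\log 2)^2$ contributions from the two regions $\{u+v\le 1\}$, $\{u+v>1\}$ cancel; I checked that $I|_{u+v\le 1}=-\tfrac 14+\tfrac 32\log 2$ and $I|_{u+v>1}=\tfrac 14-\tfrac 32\log 2+\tfrac{\pi^2}{12}$, so the cancellation works and $I=\pi^2/12$ as claimed. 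Your route uses the standard Euler value $Li_2(1/2)$ rather than $\mathrm{Re}\,Li_2(2)$ and a primitive of $Li_2(1-1/u)$; arguably this is a bit more self-contained.

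One small point that needs tightening: the bound you give for the $\ell<\varepsilon n$ part of the sum, $O(\varepsilon n^2\log^2 n)$, is not $o(n^2)$ for fixed $\varepsilon$, so you cannot simply "let $\varepsilon\to 0$ afterwards" in the usual two-limit argument. The fix is harmless: using $\sum_{r=0}^{n}\frac{1}{r+\ell}=O(\log(n/\ell))$ rather than $O(\log n)$ gives the refined bound $O\bigl(n^2\,\varepsilon\log^2(1/\varepsilon)\bigr)$ for the $\ell<\varepsilon n$ contribution, which does go to zero as $\varepsilon\to 0$ uniformly in $n$, and then the standard $\limsup/\liminf$ argument closes. (Also, the singular locus $\{x=y=0\}\cup\{y=z=0\}$ is one-dimensional, i.e.\ of codimension two, not two-dimensional as written; the local-integrability conclusion is of course still correct.)
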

\begin{proof}
Comparing the sum with an integral (by the Lebesgue
dominated convergence theorem) and making the change of variables
$r=\min(u+v,u+w)$ and $s=\max(u+v,u+w)$, we obtain
\begin{eqnarray*}
\sum_{r+\ell+s\le n}\frac 1{(r+\ell)(\ell+s)}&\sim&n\int_{\{u,v,w>0\ :\ u+v+w\le 1\}}
   \frac{du\, dv\, dw}{(u+v)(u+w)}\\
&\sim&2n\int_0^1\left(\int_u^{\frac {1+u}2}\frac 1r\left(
   \int_r^{1-r+u}\frac{ds}s\right)\, dr\right)\, du\\
&\sim&2n\int_0^1\left(\int_u^{\frac {1+u}2}\frac 1r\log\left(
    \frac{1+u}r-1\right)\, dr\right)\, du.
\end{eqnarray*}
But
$$\int_u^{\frac {1+u}2}\frac 1r\log\left(
    \frac{1+u}r-1\right)\, dr=\int_2^{1+\frac 1u}\frac{\log(w-1)}w\, 
dw=Re\left(Li_2(2)-Li_2\left(1-\frac 1u\right)\right), $$
with $Li_2$ the dilogarithm function.
Indeed, we recall that for $z\ge 1$, $Li_2(z)=
\frac{\pi^2}6-\int_1^z\frac{\log(t-1)}t\, dt-i\pi\log z$.
Recall that $Re(Li_2(2))=\frac {\pi^2}4$.
Using an explicit primitive of $u\mapsto Li_2(1+\frac 1 u)$
(such as $zLi_2(1-z^{-1})+Li_2(-z)+(\log z-i\pi)\log (z+1)$),
we find
that $Re\int_0^1 Li_2\left(1-\frac 1u\right)\, du=\frac{\pi^2}6$.
Hence $\sum_{r+\ell+s\le n}\frac 1{(r+\ell)(\ell+s)}\sim
2n\pi^2((1/4)-(1/6))=\pi^2n/6$ and so
$$\sum_{k_1+r+\ell+s\le n}\frac 1{(r+\ell)(\ell+s)}
=\sum_{k_1=1}^{n-1} \sum_{r+\ell+s\le n-k_1}\frac 1{(r+\ell)(\ell+s)}
=\sum_{k_1=1}^{n-1} \sum_{r+\ell+s\le k_1}\frac 1{(r+\ell)(\ell+s)}\sim  \frac{\pi^2n^2}{12}.$$
\end{proof}
\begin{lem}\label{controlA2}
We have
$$
\sum_{k_1,r,\ell,s\ge 1:k_1+r+\ell+s\le n}\frac 1{r\ell+rs+s\ell}\sim  n^2 J,$$
with
$$J:=
\int_{[0,1]^3}\frac{(1-(u+v+w)){\bf 1}_{\{u+v+w\le 1\}}\,  du\, dv\, dw}
    {uv+uw+vw}.$$
\end{lem}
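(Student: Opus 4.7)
The plan is to reduce the quadruple sum to a Riemann sum after first summing over $k_1$. Performing that inner summation gives
\begin{equation*}
\sum_{\substack{k_1,r,\ell,s\ge 1\\ k_1+r+\ell+s\le n}}\frac{1}{r\ell+rs+s\ell}
=\sum_{\substack{r,\ell,s\ge 1\\ r+\ell+s\le n-1}}\frac{n-(r+\ell+s)}{r\ell+rs+s\ell},
\end{equation*}
and since $r\ell+rs+s\ell$ is homogeneous of degree $2$ in $(r,\ell,s)$, the rescaling $(u,v,w)=(r/n,\ell/n,s/n)$ exhibits each summand, multiplied by the cell volume $n^{-3}$, as a Riemann contribution for $n^{-1}(1-(u+v+w))/(uv+uw+vw)$. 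Summing the cell contributions recognizes the whole thing as $n^2$ times the integral defining $J$ over the unit simplex.

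One auxiliary step is to check that $J<\infty$. Integrating in $w$ over $(0,1-u-v)$ yields an explicit expression which near the origin is of order $(u+v)^{-1}\log((u+v)/(uv))$; the remaining two-dimensional integral on $(0,1)^2$ is straightforwardly convergent, for instance via the change of variables $u=\rho\sigma$, $v=\rho(1-\sigma)$ with $\rho,\sigma\in(0,1)$, which trades the singular factor for $\rho$ and an integrable logarithmic factor.

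The main obstacle is the singularity of $(uv+uw+vw)^{-1}$ at the origin, which must be handled uniformly in $n$. I would split the sum at a threshold $\varepsilon>0$. On the bulk region $\min(r,\ell,s)>\varepsilon n$, the rescaled summand is bounded and continuous on a compact piece of the simplex, so a standard Riemann sum argument gives that this part is asymptotic to $n^2 J_\varepsilon$, where $J_\varepsilon$ is the integral defining $J$ restricted to $\{\min(u,v,w)>\varepsilon\}$. Monotone convergence yields $J_\varepsilon\to J$ as $\varepsilon\to 0$. The complement, by symmetry reducible to the case $\ell\le\varepsilon n$, is controlled by using the monotonicity of $(uv+uw+vw)^{-1}$ in each variable to compare the sum to the integral, obtaining a bound of the form $Cn^2\int_{\{v\le\varepsilon,\,u+v+w\le 1\}}(uv+uw+vw)^{-1}\,du\,dv\,dw$, which tends to $0$ as $\varepsilon\to 0$ by dominated convergence against the integrable majorant provided by the finiteness of $J$. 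Letting $\varepsilon\to 0$ after $n\to\infty$ gives the claimed asymptotic.
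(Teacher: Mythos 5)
Your proof is correct and follows essentially the same route as the paper: sum over $k_1$ first to reduce to $\sum_{r+\ell+s\le n}(n-(r+\ell+s))/(r\ell+rs+s\ell)$, then recognize this triple sum as asymptotic to $n^2 J$ via a Riemann-sum argument. The paper writes that triple sum exactly as $n^2\int_{[0,1]^3}f\bigl(\lceil nu\rceil/n,\lceil nv\rceil/n,\lceil nw\rceil/n\bigr)\,du\,dv\,dw$ with $f(u,v,w)=(1-u-v-w)\mathbf 1_{\{u+v+w\le 1\}}/(uv+uw+vw)$ and invokes dominated convergence directly with $f$ itself as dominant (since $\lceil nu\rceil/n\ge u$, etc.), which subsumes your $\varepsilon$-threshold split into one step; your explicit verification that $J<\infty$ fills in a detail the paper leaves implicit.
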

\begin{proof}
We have
\begin{eqnarray*}
\sum_{k_1+r+\ell+s\le n}\frac{1}{r\ell+rs+s\ell} 
&=& \sum_{r+\ell+s\le n}\frac{n-(r+\ell+s)}{r\ell+rs+s\ell} \\
&=& n^2\int_{[0,1]^3}f\left(\frac{\lceil nu\rceil}n,
\frac{\lceil nv\rceil}n,\frac{\lceil nw\rceil}n\right)\, dudvdw\\
&\sim&n^2\int_{[0,1]^3}f(u,v,w)\, dudvdw=n^2J,
\end{eqnarray*}
with $f(u,v,w):=\frac{1-u-v-w}{uv+uw+vw}{\mathbf 1}_{\{
       u+v+w\le 1\}}$,
due to the Lebesgue dominated convergence theorem.
\end{proof}
\section{Proof of Theorem \ref{thm1}}\label{expectation2}
\begin{coro}\label{coro2}
Let $P$ be a probability measure on $\bar M$ with density $h$
with respect to $\bar\mu$. Assume that $h$ is in ${\mathbb L}^2(\bar\mu)$.
Then $${\mathbb E}_P[V_n]=cn\log n+O(n).$$
\end{coro}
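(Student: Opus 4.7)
The plan is to leverage Theorem \ref{thm1a} together with the variance estimate in Proposition \ref{prop1} via a single Cauchy--Schwarz argument. The idea is that although $V_n$ has magnitude $\sim n\log n$, its fluctuations around its mean are only of order $n$ under $\bar\mu$, and this is exactly what is needed to change initial measure at a cost which is absorbed in the $O(n)$ remainder.

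Concretely, I would first write
$$\mathbb{E}_P[V_n] - \mathbb{E}_{\bar\mu}[V_n] = \int (h-1)\, V_n \, d\bar\mu,$$
and then use the fact that $h$ is a probability density with respect to $\bar\mu$ so that $\int (h-1)\, d\bar\mu = 0$. This allows me to recenter $V_n$ inside the integral without changing the value:
$$\mathbb{E}_P[V_n] - \mathbb{E}_{\bar\mu}[V_n] = \int (h-1)\bigl(V_n - \mathbb{E}_{\bar\mu}[V_n]\bigr) \, d\bar\mu.$$

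Cauchy--Schwarz then gives
$$\bigl|\mathbb{E}_P[V_n] - \mathbb{E}_{\bar\mu}[V_n]\bigr| \le \|h-1\|_{L^2(\bar\mu)} \sqrt{\var_{\bar\mu}(V_n)}.$$
The first factor is finite by assumption (indeed $\|h-1\|_{L^2(\bar\mu)} \le \|h\|_{L^2(\bar\mu)} + 1$), and by Proposition \ref{prop1} the second factor is $O(n)$. Combining this $O(n)$ bound with the estimate $\mathbb{E}_{\bar\mu}[V_n] = c\, n\log n + O(n)$ of Theorem \ref{thm1a} yields the corollary.

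There is no real obstacle: the statement essentially packages together the two preceding main results. The only slightly subtle point is the recentering step, which is what upgrades the naive bound from Cauchy--Schwarz applied directly to $V_n$ (which would give only $O(n\log n)$, since $\|V_n\|_{L^2(\bar\mu)} \sim c\, n\log n$ is dominated by the mean) to the desired $O(n)$. If one wanted to weaken the hypothesis on $h$ later (e.g. to $L^p$ for some $p<2$), one would need a corresponding strengthening of the variance estimate, which is why it is natural that $L^2$ is the relevant regularity here.
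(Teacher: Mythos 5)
Your proof is correct and is essentially the same as the paper's: both rewrite $\mathbb{E}_P[V_n]-\mathbb{E}_{\bar\mu}[V_n]$ as the $\bar\mu$-integral of $h$ against the centered variable $V_n-\mathbb{E}_{\bar\mu}[V_n]$, apply Cauchy--Schwarz, and invoke Proposition \ref{prop1} for the $O(n)$ bound on $\sqrt{\var_{\bar\mu}(V_n)}$ before concluding with Theorem \ref{thm1a}. The only cosmetic difference is that you additionally center $h$ to $h-1$, which is harmless but unnecessary once $V_n$ is centered.
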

\begin{proof}[Proof of Corollary \ref{coro2}]
We have
\begin{eqnarray*}
|{\mathbb E}_P[V_n]-{\mathbb E}_{\bar\mu}[V_n]|&=&{\mathbb E}_{\bar\mu}
[(V_n-{\mathbb E}_{\bar\mu}[V_n])h]\\
&\le& \sqrt{\var _{\bar\mu}(V_n)}\Vert h\Vert_2=O(n)\Vert h\Vert_2=O(n),
\end{eqnarray*}
according to Theorem \ref{prop1}. We conclude thanks to
Theorem \ref{thm1a}.
\end{proof}
For any $t>0$, we define $n_t$ on $\mathcal M$ by
$n_t:=\max\{m\ge 0:\sum_{k=0}^{m-1}\tau\circ T^k\le t\}$
the number of reflections before time $t$.
\begin{coro}\label{coro2b}
Let $h$ be a probability density with respect to $\bar\mu$
belonging to ${\mathbb L}^p(\bar\mu)$ for some $p>2$. We have
$${\mathbb E}_{h\bar\mu}[V_{n_t}]= ct\log t/{\mathbb E_{\bar\mu}[\tau]}
+O(t), \ \ \mbox{as }t\mbox{ goes to infinity.}$$

\end{coro}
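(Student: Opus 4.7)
The strategy is to replace the random reflection count $n_t$ by its deterministic approximation $m:=\lfloor t/\bar\tau\rfloor$, where $\bar\tau:=\mathbb E_{\bar\mu}[\tau]$, and then invoke Corollary \ref{coro2}. Since $p>2$ one has $h\in L^2(\bar\mu)$, so Corollary \ref{coro2} yields $\mathbb E_{h\bar\mu}[V_m]=cm\log m+O(m)=(c/\bar\tau)\,t\log t+O(t)$. It therefore suffices to prove
$$\mathbb E_{h\bar\mu}\bigl[|V_{n_t}-V_m|\bigr]=O(t). \qquad(\ast)$$

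To establish $(\ast)$ I would begin with the concentration of $n_t$ around $m$. Writing $S_n^\tau:=\sum_{k=0}^{n-1}\tau\circ\bar T^k$ for the Birkhoff sum of the bounded function $\tau$, the exponential decorrelation of $(\bar M,\bar\mu,\bar T)$ discussed in Section \ref{deco} implies a CLT, and in particular $\mathbb E_{\bar\mu}[(S_n^\tau-n\bar\tau)^2]=O(n)$. Inverting the defining relation $S_{n_t}^\tau\le t<S_{n_t+1}^\tau$ and using $\min\tau\le\tau\le\max\tau$ transfers this into $\mathbb E_{\bar\mu}[(n_t-m)^2]=O(t)$. Setting $K:=|n_t-m|$ and applying H\"older's inequality with $h\in L^p(\bar\mu)$ and conjugate exponent $q:=p/(p-1)<2$ (so that $L^2(\bar\mu)\hookrightarrow L^q(\bar\mu)$ by finiteness of $\bar\mu$), one deduces $\mathbb E_{h\bar\mu}[K]\le \|h\|_p\,\|K\|_{L^q(\bar\mu)}=O(\sqrt t)$.

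For the final estimate, the monotonicity of $n\mapsto V_n$ gives $|V_{n_t}-V_m|\le V_{m+K}-V_{(m-K)_+}=\sum_{j=(m-K)_+}^{m+K-1}(V_{j+1}-V_j)$. A direct computation from formula \refeq{EVn} combined with Proposition \ref{prop2} shows $\mathbb E_{\bar\mu}[V_{j+1}-V_j]=c\log j+O(1)$ uniformly for $j\le t$, and a further H\"older argument (still exploiting $h\in L^p$) bounds the expected sum of these increments over the random window of length $2K$ by $O(\log t)\cdot\mathbb E_{h\bar\mu}[K]=O(\sqrt t\,\log t)=o(t)$, which is more than $(\ast)$ requires. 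The main obstacle is that the crude deterministic sandwich $n_t\in[t/\max\tau,\,t/\min\tau]$ only delivers $O(t\log t)$ on the left-hand side of $(\ast)$; the sharper estimate forces one to use the stochastic concentration $K=O(\sqrt t)$, and the assumption $p>2$ is precisely what permits the H\"older transfer of the $L^2$ CLT bound on $K$ into an $L^q$-bound with $q<2$ under the measure $h\bar\mu$.
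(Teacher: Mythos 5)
The idea of replacing $n_t$ by $m:=\lfloor t/\bar\tau\rfloor$, invoking Corollary \ref{coro2}, and controlling the error via stochastic concentration of $n_t$ is exactly the strategy of the paper. The concrete problem is in the last step. From $|V_{n_t}-V_m|\le V_{m+K}-V_{(m-K)_+}=\sum_{j=(m-K)_+}^{m+K-1}(V_{j+1}-V_j)$ with $K:=|n_t-m|$ random, you assert that ``a further H\"older argument'' bounds $\mathbb E_{h\bar\mu}$ of this random-window sum by $O(\log t)\cdot\mathbb E_{h\bar\mu}[K]$. That multiplication of an average increment size by an average window length is not a consequence of H\"older's inequality, and it is where the argument actually breaks: the summands $V_{j+1}-V_j$ and the random endpoint $K$ are far from independent, $V_{j+1}-V_j$ is not almost surely $O(\log t)$ (its only a priori bound is $j+1$), and no Wald-type identity is available here ($K$ is not a stopping time, the increments are not i.i.d.). If you try to make the H\"older step explicit, e.g.\ writing the sum as $\sum_j (V_{j+1}-V_j)\mathbf 1_{|j-m|\le K}$ and applying a three-factor H\"older with $\|h\|_p$, $\|V_{j+1}-V_j\|_{q_1}$ and $\mathbb P(K\ge |j-m|)^{1/q_2}$, you will find that no admissible choice of $(q_1,q_2)$ with $1/p+1/q_1+1/q_2=1$ closes the estimate; you would need moment bounds for the increments $V_{j+1}-V_j$ beyond first order, which the paper never establishes and which are not elementary.

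What actually makes the argument go through, and what the paper does, is a deterministic/random event split: on $\{K\le\varepsilon t\}$ the random window is contained in the deterministic window $[m-\varepsilon t,m+\varepsilon t]$, which converts the random-window sum into a deterministic $V_{m+\varepsilon t}-V_{(m-\varepsilon t)_+}$ whose expectation under $h\bar\mu$ is controlled directly (the paper does this by estimating $\mathbb E_{\bar\mu}[h\mathbf 1_{E_{k,j}}]$ via Proposition \ref{prop2} and a single H\"older, yielding $O(\varepsilon t^{1+1/p})$), while on $\{K>\varepsilon t\}$ one bounds $D$ by the global $V_{\lfloor t/\min\tau\rfloor}$ and uses a three-factor H\"older together with the second moment of $V_n$ (from Theorems \ref{thm1a} and \ref{prop1}) and the tail of $K$. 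The paper uses the full family of moment bounds $\|n_t-t/\bar\tau\|_m^m\le\tilde K_mt^{m/2}$ and optimizes in $m$ and $\varepsilon$; your weaker $L^2$ bound on $K$ would in fact suffice here once the good/bad split is in place (Chebyshev gives $\mathbb P(K>\varepsilon t)=O(\varepsilon^{-2}t^{-1})$, and $p>2$ makes the exponent $1/2-1/p>0$ strictly positive, which is enough to beat the $t\log t$ from $\|V_{\lfloor t/\min\tau\rfloor}\|_2$), but the split itself is not optional. You even identify the crude sandwich as ``the main obstacle'' -- the missing ingredient is precisely this event decomposition, not a direct H\"older applied to the random-window sum.
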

\begin{proof}
To simplify notations, we write $\bar\tau:={\mathbb E}_{\bar\mu}[\tau]$.
Observe that $n_t\le t/\min\tau$ on $\bar M$.
We define
$$D:=\left|\sum_{k,j=0}^{n_t-1}{\mathbf 1}_{E_{k,j}}
-\sum_{k,j=0}^{\lfloor t/\tau\rfloor-1}{\mathbf 1}_{E_{k,j}}\right|
    \le 2\sum_{k=0}^{\lfloor t/\min\tau\rfloor-1}
  \sum_{j=\min(n_t,\lfloor t/\bar\tau\rfloor)}^
     {\max(n_t,\lfloor t/\bar\tau\rfloor)-1}{\mathbf 1}_{E_{k,j}}. $$
Due to corollary \ref{coro2}, it is enough to prove that
${\mathbb E}_{h\bar\mu}[D]=O(t)$.
Recall that (see \cite{FPCMP})
\begin{equation}\label{controlent}
\forall m\ge 1,\ \ \exists \tilde K_m,\ \ \sup_{t>0}
  \left\Vert  n_t-\frac t{\bar\tau}
  \right\Vert_{m}^m\le\tilde K_m t^{\frac m2}.
\end{equation}
Let $\varepsilon>0$. 
Due to Proposition \ref{prop2}, for some $C>0$, we have
\begin{eqnarray*}
\displaystyle{\mathbb E}_{h\bar\mu}\left[D{\mathbf 1}_{|n_t-(t/\bar\tau)|\le
   \varepsilon t}\right]
&\le& 2\sum_{k=0}^{\lfloor \frac t{\min\tau}\rfloor-1}
\sum_{j=\lfloor \frac t{\bar\tau}-\varepsilon t\rfloor-1}^{
\lfloor \frac t{\bar\tau}+\varepsilon t\rfloor-1}{\mathbb E}_{\bar\mu}
[h{\mathbf 1}_{E_{k,j}}]\\
&\le& 2 \sum_{k=0}^{\lfloor \frac t{\min\tau}\rfloor-1}
\sum_{j=\lfloor\frac t{\bar\tau}-\varepsilon t\rfloor}
^{\lfloor\frac t{\bar\tau}+\varepsilon t\rfloor}\Vert h\Vert_p(\bar\mu(E_{k,j}))^{1-\frac 1p}\\
&\le& 2\Vert h\Vert_p\left(\lceil 2\varepsilon t\rceil
  +2\sum_{j=0}^{\lceil 2\varepsilon t\rceil}
  \sum_{r=1}^{\lfloor\frac t{\min\tau}\rfloor}
 Cr^{\frac 1p -1}\right)=O(\varepsilon t^{1+\frac 1p}).
\end{eqnarray*}
Moreover, for any $m\ge 1$, we have
\begin{eqnarray*}
{\mathbb E}_{h\bar\mu}\left[D{\mathbf 1}_{|n_t-(t/\bar\tau)|>
   \varepsilon t}\right]
    &\le&{\mathbb P}(|n_t-(t/\bar\tau)|> \varepsilon t|)^{\frac 12-\frac 1p}\Vert h\Vert_p\left({\mathbb E}_{\bar\mu}
  \left[\left(\sum_{k,j=0}^{\lfloor\frac t{\min\tau}\rfloor-1}{\mathbf 1}
       _{E_{k,j}}\right)^2\right]\right)^{\frac 12}\\
&\le& \left(\frac {\tilde K_m t^{\frac m2}}{(\varepsilon t)^m}\right)^{\frac 12-\frac 1p}
\Vert h\Vert_p\left(\var _{\bar\mu}(V_{\lfloor\frac t{\min\tau}\rfloor})
      +({\mathbb E}_{\bar\mu}[V_{\lfloor\frac t{\min\tau}\rfloor}])^2\right)^{\frac 12}\\
&\le& \left(\frac {\tilde K_m}{\varepsilon^m t^{\frac m2}}\right)
^{\frac 12-\frac 1p} \Vert h\Vert_p  Ct\log t=O\left(t\log t
    \varepsilon^{-\tilde m}t^{-\frac {\tilde m} 2}\right),
\end{eqnarray*}
with $\tilde m:=m(p-2)/2p$ (due to \refeq{controlent}, to Theorem \ref{prop1} and to Theorem \ref{thm1a}).
Take $\varepsilon=t^{\frac {-(1/p)-(\tilde m/2)}{\tilde m+1}}$.
We obtain
$${\mathbb E}_{h\bar\mu}[D]=O(\varepsilon t^{1+\frac 1p}\log t)
=o(t),$$
by taking $m$ large enough since $p>2$.
\end{proof}
\begin{coro}\label{coro2c}
Let $H$ be a probability density with respect to $\nu$ on $\mathcal M$
such that
$$h:(q,\vec v)\mapsto \sum_{\ell\in\mathbb Z^2}
   \int_0^{\tau(q,\vec v)}H(q+\ell+s\vec v,\vec v)\,ds $$
belongs to ${\mathbb L}^p(\bar\mu)$ for some $p>2$.
Then
$${\mathbb E}_{H\nu}[\mathcal V_t]= ct\log t/{\mathbb E_{\bar\mu}
[\tau]}+O(t),\ \ \ \mbox{as }t\mbox{ goes to infinity}.$$
\end{coro}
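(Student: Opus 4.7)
The plan is to reduce Corollary \ref{coro2c} to Corollary \ref{coro2b} via two steps: (i) lift the initial distribution on $\mathcal M$ to a distribution on the base $\bar M$ using the suspension-flow identification, and (ii) compare the continuous-time count $\mathcal V_t$ with the discrete-time count $V_{n_t}$.

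For (i), I would use that $\Delta:\tilde{\mathcal M}\to\mathcal M$ is a measure-preserving conjugacy between the suspension flow and the Lorentz flow, see \eqref{suspension}. If $X_0\sim H\nu$, then $\Delta^{-1}(X_0)\sim \tilde H\tilde\nu$ with $\tilde H=H\circ\Delta$, so that $\tilde H((q,\vec v),s)=H(q+s\vec v,\vec v)$. Integrating out the fiber variable $s$ and folding the $\mathbb Z^2$-periodicity (using $d\mu=2\sum_i|\partial O_i|\,d\bar\mu$ on a fundamental domain together with the $\mathbb Z^2$-invariance of both $\tau$ and $\mu$), the law of the past-reflection-mod-$\mathbb Z^2$ of $X_0$ is
$$h'\,\bar\mu\qquad\text{where}\qquad h'=2\textstyle\sum_i|\partial O_i|\cdot h.$$
By Fubini, $h'$ integrates to one; and the hypothesis $h\in\mathbb L^p(\bar\mu)$ with $p>2$ transfers verbatim to $h'$, which is therefore eligible for Corollary \ref{coro2b}.

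For (ii), the trajectory on $[0,t]$ starting from $X_0$ consists of the line segments $[\pi_Q T^k x_0,\pi_Q T^{k+1}x_0]$ for $0\le k\le n^*$, with the first clipped at $X_0$ (time $s_0$ into it) and the last clipped at $Y_t(X_0)$. Here $x_0\in M$ is the past reflection and $n^*$ is the number of reflections of $(Y_r)_{r\in[0,t]}$, which differs from the $n_t$ of Corollary \ref{coro2b} by $O(1)$. The continuous-time self-intersection count equals twice the number of pairs of distinct segments that cross, so
$$\bigl|\mathcal V_t-\bigl(V_{n^*+1}-(n^*+1)\bigr)\bigr|\le 2R,$$
where $R$ counts full segments among $\{0,\ldots,n^*\}$ meeting either of the two partial boundary segments. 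Since $\mathbb E_{\bar\mu}[V_n]\sim cn\log n$ is distributed over $n$ segments, $\mathbb E_{\bar\mu}[R]=O(\log t)$; H\"older's inequality with conjugate exponents $(p,q)$, together with an $\mathbb L^q(\bar\mu)$-bound on $R$ obtained from bounding the second moment of a single-segment self-intersection count via Proposition \ref{LEM} or Theorem \ref{prop1}, yields $\mathbb E_{h'\bar\mu}[R]=o(t)$.

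Combining the two steps gives $\mathbb E_{H\nu}[\mathcal V_t]=\mathbb E_{h'\bar\mu}[V_{n_t}]+O(t)$, and Corollary \ref{coro2b} applied to the probability density $h'\in\mathbb L^p(\bar\mu)$ furnishes $\mathbb E_{h'\bar\mu}[V_{n_t}]=ct\log t/\mathbb E_{\bar\mu}[\tau]+O(t)$, which concludes. The main obstacle is the boundary comparison in step (ii): one needs an $\mathbb L^q$-control on the number of self-intersections involving a single designated segment so that the H\"older argument against $h\in\mathbb L^p$ still produces a $o(t)$ error under the non-stationary initial measure. The assumption $p>2$ is exactly what is required for this bound, paralleling the role of the same hypothesis in the proof of Corollary \ref{coro2b}.
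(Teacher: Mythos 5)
Your step (i) is exactly the paper's reduction: the suspension-flow identification \eqref{suspension} together with the $\mathbb Z^2$-invariance of $\tau$ and $\mu$ turns $\mathbb E_{H\nu}[V_{n_t}]$ into $\mathbb E_{h\mu_{|\bar M}}[V_{n_t}]=\mathbb E_{2\sum_i|\partial O_i|\,h\,\bar\mu}[V_{n_t}]$, and the hypothesis $h\in\mathbb L^p(\bar\mu)$ makes this density eligible for Corollary \ref{coro2b}.

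Step (ii), however, is both over-engineered and incomplete. You are aiming for an $o(t)$ error in the comparison $\mathcal V_t$ vs.\ $V_{n_t}$, but the statement only requires $O(t)$, and the paper obtains it from a \emph{deterministic, pointwise, uniform} bound: for every $(q,\vec v)\in M$ and $s\in[0,\tau(q,\vec v))$,
\[
\bigl|\mathcal V_t(q+s\vec v,\vec v)-V_{n_t(q,\vec v)}(q,\vec v)\bigr|\le 2n_t\le \frac{2t}{\min\tau}=O(t),
\]
since each of the $O(1)$ partial or misaligned boundary segments can meet each of the $n_t$ full segments at most once, and $n_t\le t/\min\tau$ because free flights are bounded below by $\min\tau>0$. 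This makes your H\"older argument, the $\mathbb L^q$-control on the boundary count $R$, and the second-moment estimate via Proposition \ref{LEM} all unnecessary. Moreover, those auxiliary estimates are not actually established in your write-up: the assertion ``$\mathbb E_{\bar\mu}[R]=O(\log t)$ because $\mathbb E_{\bar\mu}[V_n]\sim cn\log n$ is distributed over $n$ segments'' is a heuristic, not a proof (the boundary segments are not exchangeable with generic segments, and the first one is tied to the initial condition), and no $\mathbb L^q$-bound on $R$ is produced. As it stands, step (ii) has a genuine gap. Finally, your closing claim that $p>2$ is ``exactly what is required'' for this boundary comparison is a misreading: in Corollary \ref{coro2c} the exponent $p>2$ plays no role in the $\mathcal V_t$ vs.\ $V_{n_t}$ comparison at all; it is simply inherited because Corollary \ref{coro2b} needs it to control the fluctuations of $n_t$ around $t/\mathbb E_{\bar\mu}[\tau]$.
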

\begin{proof}[Proof of Theorem \ref{thm1}]
For every 
$(q,\vec v)\in\bar M$, every $\ell\in\mathbb Z^2$ and every
$s\in[0,\tau(q,\vec v))$, we have
$$\mathcal V_t(q+\ell+s\vec v,\vec v)=O(n_t)+
V_{n_t(q,\vec v)}(q,\vec v)=O(t)+V_{n_t(q,\vec v)}(q,\vec v).  $$
%It is worth noting that
%$$\forall k,j\ge 1,\ \ \forall(q,\vec v)\in\bar M,\ \ 
%  \forall s\in[0,\tau(q,\vec v)),\ \  
%        E_{k,j}(q+\ell+s\vec v,\vec v)=E_{k,j}(q,\vec v).$$
So, due to \refeq{suspension}, 
we have
$${\mathbb E}_{H\nu}\left[\mathcal V_t
\right]=O(t)+{\mathbb E}_{h\mu_{|\bar M}}[V_{n_t}]
={\mathbb E}_{2h\sum_i|\partial O_i|\bar\mu}\left[V_{n_t}
\right]= ct\log t/{\mathbb E_{\bar\mu}[\tau]}+O(t),$$
according to Corollary \ref{coro2b}.
\end{proof}
\begin{proof}[Proof of Theorem \ref{thm1}]
We apply directly Corollary \ref{coro2c} with
$H(q,\vec v)={\mathbf 1}_{q\in[0,1]^2}$ and $h(q,\vec v)=\tau(q,\vec v)$.
\end{proof}
\section{Almost sure convergence}\label{LFGN}
In this section we prove Corollary \ref{thm2}
by a classical argument (see \cite{DE} for example).
Let $\gamma\in(0,1/2)$. 
According to Theorems \ref{thm1a} and \ref{prop1}, we have
$$\var_{\bar\mu}( V_n)/({\mathbb E}_{\bar\mu}[V_n])^2 =O(\log^{-2}n).$$
Due to the Bienaym\'e-Chebychev inequality and to
the Borel Cantelli lemma, this implies the $\bar\mu$-almost sure convergence 
of $(V_{\exp n^{\frac {1+\gamma}2}}/{\mathbb E}_{\bar\mu}
[V_{\exp n^{\frac {1+\gamma}2}}])_n$ to 1.
Therefore, the following convergence holds $\bar\mu$-almost surely:
$$\lim_{n\rightarrow+\infty}
\frac{V_{\exp n^{\frac {1+\gamma}2}} }
{n^{\frac{1+\gamma}2} \exp n^{\frac {1+\gamma}2} }=c.$$
Now, for every integer $N\in[\exp n^{\frac {1+\gamma}2}, \exp (n+1)^{\frac {1+\gamma}2}]$, we have
$$ \frac{V_{\exp n^{\frac {1+\gamma}2}}}{(n+1)^{\frac{1+\gamma}2} \exp (n+1)^{\frac {1+\gamma}2} } \le \frac{V_N}{N\log N}\le \frac{V_{\exp (n+1)^{\frac {1+\gamma}2}}}     {n^{\frac{1+\gamma}2} \exp n^{\frac {1+\gamma}2} }.$$
Since $\lim_{n\rightarrow+\infty}{(n+1)^{\frac{1+\gamma}2} \exp (n+1)^{\frac {1+\gamma}2}}/({ n^{\frac {1+\gamma}2}\exp n^{\frac {1+\gamma}2} })=1$, we conclude the $\bar\mu$-almost sure convergence of $(V_{n}/(n\log n))_n$ to $c$.

For any $t>0$, we write $n_t$ for the number of reflection times before
time $t$. Recall that $(t/n_t)_t$ converges $\bar\mu$-almost surely to 
$\mathbb E_{\bar\mu}[\tau]$ as $t$ goes to infinity. 
Hence we have, $\bar\mu$-almost surely,
$$\frac{V_{n_t}}{t\log t}
\sim \frac{V_{n_t}}{\mathbb E_{\bar\mu}[\tau]n_t\log n_t}
\sim \frac c{\mathbb E_{\bar\mu}[\tau]},\ \ \mbox{ as }t\rightarrow+\infty.$$
Since $V_{n_t}(q+\ell,\vec v)=V_{n_t}(q,\vec v)$ for every
$(q,\vec v)\in\bar M$ and every $\ell\in\mathbb Z^2$.
We also have, $\mu$-almost everywhere, $\frac{V_{n_t}}{t\log t}
\sim \frac c{\mathbb E_{\bar\mu}[\tau]}$.
Recall now that
$$\forall(q,\vec v)\in M,\ \forall s\in[0,\tau(q,\vec v)),\ \ \ \  \left|\mathcal V_t(q+s\vec v,\vec v)-V_{n_t(q,\vec v)}(q,\vec v)
\right|\le 2n_t\le 2\frac t{\min\tau}.  $$
Hence, due to \refeq{suspension}, we obtain
$$\nu\left(\left\{\frac{\mathcal V_t}{t\log t}\not\rightarrow 
 \frac c{\mathbb E_{\bar\mu}[\tau]}\right\}\right)
\le(\max\tau) \mu\left(\left\{\frac{ V_{n_t}}
     {t\log t}\not\rightarrow 
 \frac c{\mathbb E_{\bar\mu}[\tau]}\right\}\right) =0.$$
\qed

\end{document}